\setlist{nosep}
\newcommand{\Z}{{\mathbb Z}}
\newtheorem{theorem}{Theorem}[section]
\newtheorem{lemma}[theorem]{Lemma}
\newtheorem{construction}[theorem]{Construction}
\newtheorem{corollary}[theorem]{Corollary}
\begin{document}
\baselineskip 18pt
\title{Group divisible designs with block size $4$ and group sizes $4$ and $7$}

{\small
\author
{R. Julian R. Abel \\
 School of Mathematics and Statistics\\
  UNSW Sydney\\ NSW 2052, Australia\\
 \texttt{r.j.abel@unsw.edu.au }
 \and
   Thomas Britz  \\
  School of Mathematics and Statistics\\
   UNSW Sydney\\ NSW 2052, Australia\\
  \texttt{britz@unsw.edu.au  }
 \and
   Yudhistira A.  Bunjamin  \\
  School of Mathematics and Statistics\\
   UNSW Sydney\\ NSW 2052, Australia\\
  \texttt{yudhi@unsw.edu.au  }
  \and
  Diana Combe  \\
  School of Mathematics and Statistics\\
   UNSW Sydney\\ NSW 2052, Australia\\
  \texttt{diana@unsw.edu.au }
}    
}    

\date{}

\maketitle 

\noindent{\bf Abstract:} 
In this paper, we consider the existence of group divisible designs (GDDs) with block size $4$ and group sizes $4$ and $7$.
We show that there exists a $4$-GDD of type $4^t 7^s$ for all but a finite specified set of feasible values for $(t,s)$.

\noindent{\bf Keywords:} group divisible design (GDD), feasible group type.

\noindent{\bf Mathematics Subject Classification:}  05B05

\section{Introduction}

Let $X$ be a finite set of {\it points} with a partition into parts which we call {\it groups}. 
Any $k$-element subset of $X$ is called a {\it block}. 
A collection of {\it blocks} is a {\it group divisible design} with block size $k$, or $k$-GDD if 
(i) no two points from the same group appear together in any block and 
(ii) any two points from distinct groups appear together in exactly one block. 
The {\it group type} (or {\it type}) of a $k$-GDD is the multiset $\left\{|G|: G {\text{ is a part of the partition}}\right\}$. 
The group type can also be expressed in  `exponential' notation where the type $t_1^{u_1}  t_2^{u_2} \ldots t_m^{u_m}$ means there are $u_i$ groups of size $t_i$ for $i=1,2, \dots, m$. 
A $k$-GDD of type $g^k$, in which there are $k$ groups all of the same size $g$, is commonly referred to as a {\it transversal design}, denoted TD$(k,g)$.

There are known necessary conditions for the existence of a $4$-GDD of type $\{g_1, g_2, \ldots, g_m\}$.
These are given in Theorem~\ref{necessary}.  
These necessary conditions are not sufficient. 

\begin{theorem}{\rm\cite{ABC.30less, ABC.50less, krestin, reesk=45}} \label{necessary}
Suppose there exists a $4$-GDD of type $\{g_1, g_2, \ldots, g_m\}$ where $g_1 \geq g_2 \geq \cdots \geq g_m >0$. 
Set $v= \sum_{i=1}^m g_i$.  
Then
 \begin{enumerate}
  \item $m \geq 4;$
  \item $v \equiv g_1 \equiv g_2 \equiv \cdots \equiv g_m \pmod{3};$
  \item $\sum_{i=1}^m g_i(v-g_i)  \equiv 0\pmod{4};$
  \item $3g_i + g_j \leq v$ for all $i,j \in \{1,2, \ldots, m\}$, $i \neq j;$
  \item if $m=4$, then $g_i = g_j$ for all $i,j \in \{1,2, \ldots, m\};$
  \item if $m=5$, then the group type is of the form $h^4 n^1$ where $n \leq 3h/2;$
  \item if $3g_1+g_2=v$  and   $g_1  >  g_2$, then $g_3 \leq 2g_1/3;$ 
  \item if the group type is of the form  $h_1^1$ $h_2^x$ $h_3^1$ $h_4^1  \cdots h_n^1$ 
 where $x \geq 1$ and $3h_1 + h_2 = v$ and $h_2 > h_3 \geq \cdots  \geq h_n$, then $n \geq 6$.
If further $n=6$,  then for $i,j \in \{3,4,5,6\}$  we have $h_i (h_2-h_i)  = h_j(h_2 - h_j)$. 
 \end{enumerate}
\end{theorem}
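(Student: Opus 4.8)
The plan is to obtain every part from two elementary double counts. Fixing a point $x$ in a group $G_i$, the $v-g_i$ points outside $G_i$ are each paired with $x$ exactly once, and the blocks through $x$ partition them three at a time; hence the replication number $r_i=(v-g_i)/3$ must be a nonnegative integer, which is condition~(2). Counting the cross-group pairs of points ($\binom{4}{2}=6$ to a block) gives the block total $b=\tfrac{1}{12}\sum_i g_i(v-g_i)$, and since (2) already makes each term $g_i(v-g_i)$ divisible by $3$, integrality of $b$ reduces exactly to divisibility of the sum by $4$, which is condition~(3). Condition~(1) is immediate, because any two points in distinct groups lie in a block and every block meets four distinct groups, so $m\ge 4$.

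For condition~(4) I would count at a point $y\in G_j$: of its $r_j=(v-g_j)/3$ blocks, exactly $g_i$ meet $G_i$ (one for each point of $G_i$), whence $g_i\le (v-g_j)/3$, i.e.\ $3g_i+g_j\le v$. Conditions~(5) and~(6) are then structural consequences of having few groups. When $m=4$ every block is a transversal of the four groups, so the number of blocks equals $g_ig_j$ for each pair $\{i,j\}$; equating these forces all $g_i$ equal. When $m=5$ each block misses exactly one group, so writing $b_k$ for the number of blocks missing $G_k$ and covering the $(i,j)$-pairs gives $b-b_i-b_j=g_ig_j$ for all $i\ne j$; subtracting two such relations yields $b_s-b_t=-g_i(g_s-g_t)$ for every $i\notin\{s,t\}$. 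Independence of $i$ in this identity forces at most two distinct group sizes and excludes the $3$–$2$ split, leaving type $h^4n^1$, and the bound $n\le 3h/2$ is just condition~(4) applied to this type.

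The hard part will be condition~(7), and the whole argument hinges on exploiting the tightness $3g_1+g_2=v$. A point of $G_2$ has replication number $(v-g_2)/3=g_1$, and all $g_1$ of its blocks already meet $G_1$, so \emph{every} block meeting $G_2$ meets $G_1$. I would then fix $w\in G_1$ and split its $(2g_1+g_2)/3$ blocks into the $g_2$ that meet $G_2$ and the remaining $\tfrac{2}{3}(g_1-g_2)$ that avoid $G_2$. Since only $\tfrac{2}{3}(g_1-g_2)$ blocks through $w$ avoid $G_2$, at most that many can meet $G_3$ without meeting $G_2$; writing $d_w$ for the number of blocks through $w$ meeting $G_1,G_2,G_3$ simultaneously, this gives $d_w\ge g_3-\tfrac{2}{3}(g_1-g_2)$. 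Summing over $w\in G_1$ and using $\sum_w d_w=g_2g_3$ (the count of blocks meeting all three groups) yields $g_2g_3\ge g_1g_3-\tfrac{2}{3}g_1(g_1-g_2)$, and dividing by $g_1-g_2>0$ produces $g_3\le 2g_1/3$. I expect pinning down the exact coefficient $\tfrac{2}{3}(g_1-g_2)$ here to be the one genuinely delicate point.

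Condition~(8) reuses the same tightness observation. Applying it with the unique $h_1$-group $G^{(1)}$ and any $h_2$-group shows every block meeting an $h_2$-group meets $G^{(1)}$, so any block avoiding $G^{(1)}$ lives entirely inside the singleton groups $G_3,\ldots,G_n$. For $z\in G_3$ the count $(v-h_3)/3-h_1=(h_2-h_3)/3$ is positive because $h_2>h_3$, so some block through $z$ avoids $G^{(1)}$; using $z$ together with three further points in three distinct groups among $G_4,\ldots,G_n$ forces $n-3\ge 3$, i.e.\ $n\ge 6$. If $n=6$ then every block avoiding $G^{(1)}$ is a transversal of $G_3,G_4,G_5,G_6$, so counting such blocks through the points of $G_i$ gives exactly $h_i(h_2-h_i)/3$ of them; as this is one and the same total for every $i$, we conclude $h_i(h_2-h_i)=h_j(h_2-h_j)$ for all $i,j\in\{3,4,5,6\}$.
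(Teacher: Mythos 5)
Your proposal is correct: I checked each of the eight parts and all of the counts go through. There is, however, no in-paper argument to compare it against: Theorem~\ref{necessary} is stated as a known result, imported with citations to \cite{krestin}, \cite{reesk=45}, \cite{ABC.30less} and \cite{ABC.50less}, and the paper gives no proof of it. So your blind attempt supplies something the paper deliberately omits, namely a self-contained elementary derivation. For conditions (1)--(5) your route is the standard double count: the replication number $r_i=(v-g_i)/3$ gives (2) and (4), pair counting gives $b=\frac{1}{12}\sum_i g_i(v-g_i)$ and hence (3) (divisibility of the sum by $3$ being automatic from (2)), and the transversal structure for $m=4$ gives (5), while your subtraction of the relations $b-b_i-b_j=g_ig_j$ correctly pins down $m=5$ to type $h^4n^1$. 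The real content is in (7) and (8), and your key lever — that the tight case $3g_1+g_2=v$ forces every block meeting $G_2$ to meet $G_1$, so blocks avoiding $G_1$ are confined to the remaining groups — is exactly right; the counts $d_w\ge g_3-\tfrac{2}{3}(g_1-g_2)$ and $\sum_{w\in G_1}d_w=g_2g_3$ for (7), and the count $h_i(h_2-h_i)/3$ of blocks avoiding the $h_1$-group for (8), are exact, not merely bounds where you feared delicacy (note $(h_2-h_i)/3$ is an integer by (2), which you use implicitly). Two trivial quibbles: in (1) you should state that $m\ge 2$ and the groups are nonempty, so that at least one cross pair, hence at least one block, exists; and in (8) the groups $G_3,\ldots,G_n$ are not ``singleton groups'' but groups whose sizes occur with exponent $1$ in the type --- your argument only uses that a block avoiding $G^{(1)}$ also avoids every $h_2$-group, so nothing breaks, but the wording should be corrected.
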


In this paper we are concerned with $4$-GDDs. 
We say that a multiset $\{g_1, g_2, \ldots, g_m\}$ of positive integers is a {\it feasible} group type for a $4$-GDD if it satisfies the conditions of Theorem~\ref{necessary}. 
Much work on $4$-GDDs has concentrated on the existence question, determining whether there exists a GDD with a particular type - sometimes looking at GDDs on small sets, sometimes looking at infinite families. 
Some work on $4$-GDDs has considered the enumeration question - for a particular type determining how many nonisomorphic designs can be constructed and comparing their automorphism groups.

It is sometimes convenient to consider alternative forms of some of the necessary conditions in Theorem~\ref{necessary}.
In Lemma~\ref{lemma.4gdd.necessary.condition.alternative.group.sizes.mod3} and \ref{lemma.4gdd.number.of.odd.sized.groups}, we show the equivalence of some simpler conditions that are particularly useful in this paper.

\begin{lemma}
\label{lemma.4gdd.necessary.condition.alternative.group.sizes.mod3}
In the necessary conditions for the existence of a $4$-GDD of type $\{g_1, g_2, \ldots, g_m\}$, the condition that $v \equiv g_1 \equiv g_2 \equiv \cdots \equiv g_m \pmod{3}$ can be replaced by the condition that $g_1 \equiv g_2 \equiv \cdots \equiv g_m \pmod{3}$ and either $g_1 \equiv 0 \pmod{3}$ or $m \equiv 1 \pmod{3}$.

\begin{proof}
Firstly, suppose that $v \equiv g_1 \equiv g_2 \equiv \cdots \equiv g_m \pmod{3}$.
Recall that $v = g_1 + g_2 + \cdots + g_m$.
Thus, $v \equiv g_1 + g_2 + \cdots + g_m \equiv mg_1 \pmod{3}$.
This means that $v \equiv g_1 \equiv mg_1 \pmod{3}$.
Hence, either $g_1 \equiv 0 \pmod{3}$ or $m \equiv 1 \pmod{3}$.

Next, suppose that $g_1 \equiv g_2 \equiv \cdots \equiv g_m \pmod{3}$ and $g_1 \equiv 0 \pmod{3}$. 
Then $g_1 \equiv g_2 \equiv \cdots \equiv g_m \equiv 0 \pmod{3}$.
This means that $v \equiv g_1 + g_2 + \cdots + g_m \equiv 0 \pmod{3}$.
Hence, $v \equiv g_1 \equiv g_2 \equiv \cdots \equiv g_m \pmod{3}$.

Finally, suppose that $g_1 \equiv g_2 \equiv \cdots \equiv g_m \pmod{3}$ and $m \equiv 1 \pmod{3}$.
Then $v \equiv g_1 + g_2 + \cdots + g_m \equiv mg_1 \equiv g_1 \pmod{3}$.
Hence, $v \equiv g_1 \equiv g_2 \equiv \cdots \equiv g_m \pmod{3}$.
\end{proof}
\end{lemma}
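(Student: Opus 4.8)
The statement asserts the equivalence of two conditions, so the plan is to establish both implications separately. The one computational fact underlying both directions is that whenever $g_1 \equiv g_2 \equiv \cdots \equiv g_m \pmod{3}$, the sum $v = g_1 + g_2 + \cdots + g_m$ collapses to $v \equiv m g_1 \pmod{3}$, since each $g_i$ may be replaced by $g_1$ in the sum. I would record this observation at the outset and reuse it in each direction.

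First I would assume the original condition, namely $v \equiv g_1 \equiv g_2 \equiv \cdots \equiv g_m \pmod{3}$. The mutual congruence of the $g_i$ modulo $3$ is then part of the hypothesis, so only the disjunction remains to be derived. Combining $v \equiv g_1 \pmod{3}$ with the fact $v \equiv m g_1 \pmod{3}$ yields $(m-1)g_1 \equiv 0 \pmod{3}$, and since $3$ is prime this forces either $g_1 \equiv 0 \pmod{3}$ or $m \equiv 1 \pmod{3}$, which is exactly the replacement condition.

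For the converse I would assume $g_1 \equiv g_2 \equiv \cdots \equiv g_m \pmod{3}$ together with the disjunction, and split into its two cases. If $g_1 \equiv 0 \pmod{3}$, then every $g_i \equiv 0 \pmod{3}$ and hence $v \equiv 0 \equiv g_1 \pmod{3}$; if instead $m \equiv 1 \pmod{3}$, then $v \equiv m g_1 \equiv g_1 \pmod{3}$ directly. In either case $v \equiv g_1 \equiv g_2 \equiv \cdots \equiv g_m \pmod{3}$, which completes the equivalence. There is no genuine obstacle in this argument; the only point that requires a moment's care is the appeal to the primality of $3$ (equivalently, that $\mathbb{Z}/3\mathbb{Z}$ has no zero divisors) when passing from $(m-1)g_1 \equiv 0 \pmod{3}$ to the disjunction.
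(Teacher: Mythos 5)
Your proof is correct and follows essentially the same route as the paper: both reduce the sum to $v \equiv m g_1 \pmod{3}$, derive the disjunction from $g_1 \equiv m g_1 \pmod{3}$ in the forward direction, and split into the two cases $g_1 \equiv 0$ and $m \equiv 1 \pmod{3}$ for the converse. Your only addition is spelling out explicitly that $(m-1)g_1 \equiv 0 \pmod{3}$ forces the disjunction because $3$ is prime, a step the paper leaves implicit.
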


\begin{lemma} \label{lemma.4gdd.number.of.odd.sized.groups}
In the necessary conditions for the existence of a $4$-GDD of type $\{g_1, g_2, \ldots, g_m\}$,
the condition that $\sum_{i=1}^m  g_i(v-g_i) \equiv 0 \pmod{4}$ 
can be replaced by the condition that the number of groups of odd size is either $0$ or $1$ modulo $4$.

\begin{proof}
Consider the sum of point-block pairs $S = \sum_{i=1}^m g_i(v-g_i)$ and suppose that 
\[ n = \left|\left\{i \in \{1,\ldots,m\} \::\: g_i \equiv 1 \!\!\! \pmod{2} \right\}\right|. \]
Then $n$ is the number of groups of odd size.
Note that $v \equiv n \pmod{2}$.

Suppose that $v$ is even.
If $g_i \equiv 0 \pmod{2}$, 
then $g_i(v-g_i) \equiv 0 \pmod{4}$.
If $g_i \equiv 1 \pmod{2}$, 
then $g_i(v-g_i) \equiv 1 \pmod{4}$ when $v \equiv 2 \pmod{4}$ 
and  $g_i(v-g_i) \equiv-1 \pmod{4}$ when $v \equiv 0 \pmod{4}$.
Thus,
$S \equiv \pm n \pmod{4}$.
Hence, when $v$ is even, $S \equiv 0 \pmod{4}$ if and only if $n \equiv 0 \pmod{4}$.

Now, suppose that $v$ is odd.
If $a$, $b$ and $c$ are the numbers of groups of sizes 
congruent to $v$, $2$ and $v+2$ modulo $4$ respectively,
then $n = a+c$.
If   $g_i\equiv 0$ or $v \pmod{4}$, 
then $g_i(v-g_i)\equiv 0 \pmod{4}$.
If   $g_i\equiv 2$ or $v+2\pmod{4}$, 
then $g_i(v-g_i)\equiv 2\pmod{4}$.
Thus, $S\equiv 2(b+c)\pmod{4}$.
Next, counting the number of points in the $4$-GDD modulo $4$ gives $v\equiv va+2b+(v+2)c \pmod{4}$ so rearranging gives $2(b+c) \equiv v(1-a-c) \pmod{4}$.
Thus, $S \equiv 2(b+c) \equiv v(1-a-c) \equiv v(1-n) \pmod{4}$.
Hence, when $v$ is odd, $S \equiv 0 \pmod{4}$ if and only if $n \equiv 1 \pmod{4}$.
\end{proof}

\end{lemma}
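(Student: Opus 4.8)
The plan is to reduce everything to residues modulo $4$ and to track the contribution of each group separately. Write $S = \sum_{i=1}^m g_i(v-g_i)$ and let $n$ be the number of groups of odd size. The first observation I would record is that $v \equiv n \pmod{2}$, since even-sized groups contribute nothing to the parity of $v = \sum_i g_i$ while each odd-sized group contributes $1$. Hence the parities of $v$ and of $n$ always agree, and I would split the argument into the two cases $v$ even and $v$ odd, aiming to show that $S \equiv 0 \pmod{4}$ is equivalent to $n \equiv 0 \pmod{4}$ in the first case and to $n \equiv 1 \pmod{4}$ in the second.

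For the even case, I would evaluate $g_i(v-g_i) \pmod{4}$ according to the parity of $g_i$. If $g_i$ is even then so is $v-g_i$, and the product is divisible by $4$. If $g_i$ is odd then $v-g_i$ is odd, and expanding $g_i v - g_i^2$ with $g_i^2 \equiv 1 \pmod{4}$ shows that the contribution is $1 \pmod{4}$ when $v \equiv 2 \pmod{4}$ and $-1 \pmod{4}$ when $v \equiv 0 \pmod{4}$. Summing over all groups gives $S \equiv \pm n \pmod{4}$, so $S \equiv 0 \pmod{4}$ if and only if $n \equiv 0 \pmod{4}$, as required.

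The odd case is where the real work lies, and I expect the main obstacle to be relating the contribution of the even-sized groups back to $n$, which counts only odd-sized groups. Here I would classify each group by its size modulo $4$: since $v$ is odd, the four residues $0$, $2$, $v$, $v+2$ exhaust the possibilities, with $v$ and $v+2$ being the two odd residues. Let $a$, $b$, $c$ count the groups whose sizes are $\equiv v$, $\equiv 2$, $\equiv v+2 \pmod{4}$ respectively, so that $n = a+c$. A direct check shows $g_i(v-g_i) \equiv 0 \pmod{4}$ for groups of size $\equiv 0$ or $\equiv v$, while groups of size $\equiv 2$ or $\equiv v+2$ each contribute $2 \pmod{4}$; thus $S \equiv 2(b+c) \pmod{4}$. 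The difficulty is that $2(b+c)$ does not obviously reduce to a statement about $n = a+c$.

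To overcome this I would bring in a second congruence by counting the points modulo $4$: summing the group sizes by residue class gives $v \equiv av + 2b + (v+2)c \pmod{4}$, and rearranging yields $2(b+c) \equiv v(1 - (a+c)) = v(1-n) \pmod{4}$. Since $v$ is odd and hence invertible modulo $4$, this forces $S \equiv 0 \pmod{4}$ if and only if $n \equiv 1 \pmod{4}$. Combining the two cases, and recalling that $n$ and $v$ have the same parity, the condition $S \equiv 0 \pmod{4}$ is equivalent to $n \equiv 0 \pmod{4}$ (when $n$ is even) or $n \equiv 1 \pmod{4}$ (when $n$ is odd), that is, to the number of odd-sized groups being $0$ or $1$ modulo $4$.
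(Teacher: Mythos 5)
Your proposal is correct and follows essentially the same route as the paper's own proof: the same parity split on $v$, the same $\pm 1$ contribution analysis in the even case, and in the odd case the same classification of group sizes into residues $0$, $2$, $v$, $v+2$ modulo $4$ combined with the point-counting congruence $v \equiv av + 2b + (v+2)c \pmod{4}$ to convert $2(b+c)$ into $v(1-n)$. No gaps; nothing further to add.
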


The main aim of this paper is to answer the existence question for $4$-GDDs of type $4^t 7^s$.
This paper is organised as follows. 
In Section~\ref{s:known.results}, we state some prior-known results regarding $4$-GDDs, especially those on $4$-GDDs with up to $50$ points and $4$-GDDs of type $g^p$ and $g^p n^1$ which will be used later on.
In Section~\ref{s:4t7s.constructions}, we give constructions for all feasible types of $4^t 7^s$ except for a short and finite list of exceptions.
Finally, in Section~\ref{s:summary}, we state the main result of this paper and provide a corollary to this result.

\section{Some known results about $4$-GDDs}
\label{s:known.results}

\subsection{Designs on sets with $v \le 50$}
The existence of $4$-GDDs with no more than $30$ points has been well investigated. 
In~\cite{krestin},  the existence question was answered for all but three types,
 and the existence results for those three types were completed in ~\cite{ABBC.2t5s, ABC.30less}.  

\begin{theorem} {\rm\cite {ABBC.2t5s, ABC.30less, krestin}} \label{gddlt30}
The feasible group types for a $4$-GDD on at most $30$ points are 
$1^4$,~$2^4$, $3^4$, $1^{13}$, $1^{9}4^1$, $2^7$, $3^5$,
$1^{16}$, $1^{12} 4^1$, $1^{8} 4^2$, $1^{4} 4^3$, $4^4$, $2^6 5^1$, $2^{10}$, $5^4$, $3^5 6^1$,
$1^{15} 7^1$, $2^9 5^1$, $3^8$, $3^4 6^2$, $6^4$, $1^{25}$, $1^{21} 4^1$, $1^{17} 4^2$, $1^{13}4^3$,
$1^9 4^4$, $1^5 4^5$, $1^1 4^6$, $2^{13}$, $2^3 5^4$, $2^9 8^1$, $3^9$, $3^5 6^2$, $3^1 6^4$,
$1^{28}$, $1^{24} 4^1$, $1^{20} 4^2$, $1^{16}4^3$, $1^{12} 4^4$,
$1^8 4^5$, $1^4 4^6$, $4^7$, $1^{14} 7^2$, $1^{10} 4^1 7^2$, $1^6 4^2 7^2$,
$1^2 4^3 7^2$, $7^4$, $2^{12} 5^1$, $2^2 5^5$, $2^8 5^1 8^1$, $3^8 6^1$, $3^4 6^3$, $6^5$, $3^7 9^1$.  
A~$4$-GDD exists for all these types with the definite exception of types $2^4$, $2^6 5^1$ and $6^4$. 
\end{theorem}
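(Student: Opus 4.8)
The plan is to prove the statement in two phases: first, enumerate every feasible group type with $v = \sum_i g_i \le 30$; second, for each such type either exhibit a $4$-GDD or prove that none exists. The number of candidate types is finite and small, so the enumeration is a bounded search; the real content lies in the constructions and in the three nonexistence arguments.

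For the enumeration I would run through the admissible values of $v$ and, for each, list the partitions of $v$ into $m \ge 4$ parts satisfying the necessary conditions of Theorem~\ref{necessary}. By Lemma~\ref{lemma.4gdd.necessary.condition.alternative.group.sizes.mod3} all parts lie in a single residue class modulo $3$, so the part sizes are drawn from one of $\{1,4,7,\dots\}$, $\{2,5,8,\dots\}$, $\{3,6,9,\dots\}$, with the extra constraint that either all parts are divisible by $3$ or $m \equiv 1 \pmod 3$; Lemma~\ref{lemma.4gdd.number.of.odd.sized.groups} reduces the divisibility-by-$4$ condition to counting odd parts modulo $4$. The inequality $3g_1 + g_2 \le v$ of condition~4 bounds the largest group in terms of the rest, and together with conditions~5--8 (which govern the cases $m=4,5$ and the extremal equality cases) this trims the list to exactly the types displayed. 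This phase is routine but must be carried out exhaustively to guarantee completeness of the list.

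The existence phase combines a handful of direct small designs with standard recursive machinery. The workhorse is the transversal design: a $\mathrm{TD}(4,g)$ is equivalent to a pair of mutually orthogonal Latin squares of order $g$ and hence exists for every $g \notin \{2,6\}$. From these, Wilson's fundamental construction (weighting the points of a master GDD), filling in groups with smaller GDDs or with designs on at most the group size, inflation, and the adjunction of one or more ideal points generate the great majority of the listed types; the remaining few are built by direct constructions over small groups such as $\Z_v$. I would organise these by increasing $v$, reusing each constructed design as an ingredient for larger ones.

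The genuinely hard part is the nonexistence of the three exceptional types, together with certifying that nothing else on the list fails. For $2^4$ and $6^4$ the designs would be $\mathrm{TD}(4,2)$ and $\mathrm{TD}(4,6)$, neither of which exists because there is no pair of MOLS of order $2$ or $6$. The type $2^6 5^1$ requires a dedicated argument: with $v=17$ there are exactly $20$ blocks, and since the $60$ point-pairs meeting the group of size $5$ can only be covered by blocks containing one of its points, every block meets that group in exactly one point and has its remaining three points in the six groups of size $2$. The four blocks through a fixed point of the size-$5$ group then partition the other twelve points into triples, so the blocks restricted to those twelve points form a resolvable $3$-GDD of type $2^6$ with five parallel classes, i.e.\ a nearly Kirkman triple system on $12$ points; as no such system exists, neither does a $4$-GDD of type $2^6 5^1$. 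Establishing this last reduction, and verifying that every other feasible type on the list is actually realised, is where I expect the bulk of the effort to lie.
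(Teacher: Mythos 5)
This theorem is not proved in the paper at all: it is quoted as a compilation of known results from \cite{krestin}, \cite{ABC.30less} and \cite{ABBC.2t5s}, so there is no internal proof to measure your attempt against. Your outline does, however, follow the same strategy by which the result was established in that cited literature: an exhaustive enumeration of feasible types from the necessary conditions, constructions for the types that exist, and separate nonexistence arguments for $2^4$, $2^6 5^1$ and $6^4$. Your three nonexistence arguments are correct. For $2^4$ and $6^4$, the identification with $\mathrm{TD}(4,2)$ and $\mathrm{TD}(4,6)$, hence with pairs of MOLS of orders $2$ and $6$, is exactly right. For $2^6 5^1$ your counting is sound: there are $120/6 = 20$ blocks; the $60$ pairs meeting the size-$5$ group force every block to meet that group in exactly one point; the four blocks through each of its five points truncate to a partition of the remaining twelve points; so deleting that group leaves a resolvable $3$-GDD of type $2^6$ with five parallel classes, i.e.\ an NKTS$(12)$. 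This is the standard equivalence for types $g^u m^1$ with $m = g(u-1)/2$ maximal, and NKTS$(12)$ is indeed one of the two nonexistent cases in the nearly Kirkman spectrum.

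The gap is that everything else in your proposal is a promissory note. Completeness of the displayed list is asserted to follow from a ``routine but exhaustive'' search that you never carry out, and not one of the roughly fifty existence constructions is exhibited --- yet those constructions are the bulk of the theorem's content. Generic machinery (transversal designs, Wilson's fundamental construction, filling in groups) does not by itself settle types such as $1^{15}7^1$, $2^8 5^1 8^1$ or $3^7 9^1$ on so few points; in the literature these required ad hoc direct constructions (often computer-assisted, with assumed cyclic automorphisms, exactly as this paper does in its Appendix for its own small designs), and three of the listed types resisted \cite{krestin} entirely and were only resolved decades later in \cite{ABC.30less} and \cite{ABBC.2t5s}. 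You also use the nonexistence of NKTS$(12)$ and of two MOLS of order $6$ as known facts; that is legitimate provided they are cited as external classical inputs, since your argument does not establish them. In short: right approach, correct and complete treatment of the three exceptional types, but as written it is a plan for a proof rather than a proof.
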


Existence results have been extended to point sets with up to $v=50$ points. 
For $31 \leq v \leq 50$ and $v \equiv 0 \pmod{3}$ there exist designs for all feasible types; this result was completed in~\cite{ABC.50less}. 
In~\cite{ABBCF.1mod3}, the feasible types for $31 \leq v \leq 50$ and $v \equiv 1,2 \pmod{3}$ are considered.  
The questions of existence were completed for $v \equiv 1 \pmod{3}$; and for $v \equiv 2 \pmod{3}$ the known results were extended leaving unknown the question of existence for types $2^{11} 8^1 11^1$, $2^1 5^4 8^1 11^1$, $2^6 5^2 11^2$, 
$2^{5} 5^3 8^1 11^1$, 
$2^{2} 5^2 8^1 11^2$,  $2^{1} 5^3 8^2 11^1$, 
 $2^{5} 5^3 11^2$, $2^{2} 5^2 11^3$, $2^{1} 5^3 8^1 11^2$, $5^{4} 8^2 11^1$,
  $2^{9} 5^2 11^2$, 
 $2^{8} 5^3 8^1 11^1$,   $2^6 5^1 11^3$, $2^{5} 5^2 8^1 11^2$,  $2^{4} 5^4 8^1 14^1$, $2^{4} 5^3 8^2 11^1$,  $2^{3} 11^4$,  
$2^{2} 5^7 11^1$,  $2^{2} 5^1 8^1 11^3$,  $2^{1} 5^2 8^2 11^2$ and   $5^{3} 8^3 11^1$.

\subsection{Existence results for infinite families of types} \label{knowngdds}

Extensive work has been done on the existence of $4$-GDDs of types $g^p$ and $g^p n^1$. 
For type $g^p$, existence has been completely determined as given in Lemma~\ref{lemma.gp}.

\begin{lemma}{\rm\cite {BSH}}
\label{lemma.gp} \label{lemma.4gdd.g^p}
There exists a $4$-GDD of type $g^p$  if and only if  
$(1)$  $p \geq 4;$ $(2)$  $g(p-1) \equiv 0 \pmod{3};$  $(3)$  $g^2p(p-1) \equiv 0 \pmod{12}$ and $(4)$  $(g,p) \notin \{(2,4), (6,4)\}$.
\end{lemma}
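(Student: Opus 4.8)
The plan is to prove necessity by specialising Theorem~\ref{necessary} to the equiregular type $g^p$ (where $m=p$, every $g_i=g$, and $v=gp$), and to prove sufficiency by a residue-class case analysis built on transversal designs and Steiner systems together with Wilson-type recursive constructions.

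\emph{Necessity.} Conditions (1)--(3) are immediate specialisations. Condition~1 of Theorem~\ref{necessary} gives $p=m\ge4$; equivalently condition~4, $3g_i+g_j\le v$, reads $4g\le gp$, i.e. $p\ge4$. Condition~2, $v\equiv g_1\equiv\cdots\pmod3$, becomes $gp\equiv g\pmod3$, i.e. $g(p-1)\equiv0\pmod3$, which is (2). Condition~3, $\sum_i g_i(v-g_i)\equiv0\pmod4$, becomes $p\cdot g\cdot g(p-1)=g^2p(p-1)\equiv0\pmod4$; since (2) already forces $g^2p(p-1)\equiv0\pmod3$ (either $3\mid g$ or $3\mid p-1$), this is equivalent to (3). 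The remaining conditions of Theorem~\ref{necessary} impose nothing new on a type with all groups equal. For the two exceptions, note that a $4$-GDD of type $g^4$ is by definition exactly a TD$(4,g)$, and a TD$(4,g)$ exists if and only if there are two mutually orthogonal Latin squares of order $g$, which fails precisely for $g\in\{2,6\}$; hence the feasible types $2^4$ and $6^4$ do not exist, as already recorded in Theorem~\ref{gddlt30}.

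\emph{Sufficiency.} I would first rewrite (1)--(3) in the more usable form: $p\ge4$; $3\mid g$ or $p\equiv1\pmod3$; and $g$ even or $p\equiv0,1\pmod4$. The two extreme slices give the base designs. For $p=4$ every $g$ is feasible, and a $4$-GDD of type $g^4$ exists for all $g\notin\{2,6\}$ because it is a TD$(4,g)$. For $g=1$ a $4$-GDD of type $1^p$ is a Steiner system $S(2,4,p)$, which exists exactly when $p\equiv1,4\pmod{12}$---precisely the feasible $p$ in this case. To reach the remaining $(g,p)$ with $p\ge5$ and $g\ge2$, I would use Wilson's Fundamental Construction. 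The basic move is inflation: giving every point of a $4$-GDD of type $g^p$ weight $w$ and placing a TD$(4,w)$ on each block produces a $4$-GDD of type $(gw)^p$, valid whenever $w\notin\{2,6\}$. Starting from the Steiner systems ($g=1$) and transversal designs ($p=4$), and combining inflation with filling-in-groups constructions (where groups of a larger master GDD or a truncated TD$(5,m)$ are replaced by ingredient $4$-GDDs to adjust the number of groups), one covers each residue class of $(g,p)$ up to finitely many small parameters, which are then settled by direct or computer construction.

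The hard part is the bookkeeping and the genuinely small cases. Inflation is blocked exactly when the required weight lies in $\{2,6\}$, so the group sizes $g=2$ and $g=6$ (and small $g$ in general) cannot be produced simply by weighting and need alternative ingredient designs; similarly small $p$ leaves little room for recursion. The main obstacle is therefore to organise a finite list of master and ingredient designs so that the recursive constructions reach every feasible $(g,p)$ other than $(2,4)$ and $(6,4)$, verifying the low-parameter cases by hand.
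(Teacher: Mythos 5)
Note first that the paper does not prove Lemma~\ref{lemma.gp} at all: it is quoted verbatim from Brouwer, Schrijver and Hanani \cite{BSH} as a known result, so there is no internal proof to compare against. Your necessity argument is correct and complete: conditions (1)--(3) do follow by specialising Theorem~\ref{necessary} to $m=p$, $g_i=g$, $v=gp$ (and your observation that condition (2) already supplies divisibility by $3$, so the mod-$4$ computation yields the mod-$12$ condition (3), is right), and the two exceptions follow from the classical nonexistence of two MOLS of orders $2$ and $6$, consistent with Theorem~\ref{gddlt30}.

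The gap is in sufficiency, which is the entire substance of the cited theorem. What you have written is a strategy, not a proof: you name the correct machinery (TD$(4,g)$ for $p=4$, Steiner systems $S(2,4,p)$ for $g=1$, inflation via Wilson's Fundamental Construction, filling in groups), but the essential work --- organising explicit recursions and a finite list of master and ingredient designs that provably reach \emph{every} feasible pair $(g,p)$ --- is deferred with the phrase ``up to finitely many small parameters, which are then settled by direct or computer construction,'' without exhibiting either the recursions or those designs. This deferral is precisely where the difficulty lies: as you yourself note, inflation by weight $w$ is blocked for $w\in\{2,6\}$, so the classes $g\equiv 2,4\pmod 6$ (e.g.\ all types $2^p$ with $p\equiv 1\pmod 3$, $p\ge 7$) and $g\equiv 3\pmod 6$ with $p\equiv 0\pmod 4$ cannot be obtained by weighting Steiner systems or TDs alone, and need dedicated constructions; none are given. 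In addition, your base cases silently invoke two further deep theorems (Hanani's existence of $S(2,4,p)$ for $p\equiv 1,4\pmod{12}$, and Bose--Shrikhande--Parker for two MOLS of every order $g\notin\{2,6\}$), so even the ``easy'' slices are not self-contained. As it stands the sufficiency direction would not compile into a proof; it is an outline of the argument in \cite{BSH} with its hard content omitted.
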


For type $g^p n^1$ with $p \geq 4$, necessary conditions for existence are $g \equiv n\pmod{3}$, $gp \equiv 0\pmod{3}$, $gp\big(g(p-1) + 2n\big) \equiv 0\pmod{4}$ and $0 \leq n \leq g(p-1)/2$.
The existence of such designs has been determined in many cases;
see for instance Forbes~\cite{Forbes2,Forbes3, Forbes1}, Ge and Rees~\cite{gerees}, Ge et al.~\cite{gezhu}, Rees~\cite{reesk=45}, and Wei and Ge~\cite{gegum}.  
However, for a number of these feasible types, the question of the existence of a corresponding design remains unanswered.

In this paper we make use of the existence results given in Lemmas~\ref{lemma.gpn1.0mod6} to \ref{lemma.4gdd.28p_n1}.

\begin{lemma} {\rm~\cite{Forbes1, WeiGe.4gdd.gpn1.g0mod6, gegum}} 
\label{gpn1.0mod6}
\label{lemma.gpn1.0mod6}
Suppose that $g \equiv 0\pmod{6},$ $g \geq 6,$ $p \geq 4,$ $n \equiv 0 \pmod{3}$ and $0 \leq n \leq g(p-1)/2$.
Then there exists a $4$-GDD of type $g^p n^1$ except when $(g,p,n) = (6,4,0)$.
\end{lemma}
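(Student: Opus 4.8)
My first step is to observe that, once $g \equiv 0 \pmod 6$, the stated hypotheses are \emph{exactly} the necessary conditions for a $4$-GDD of type $g^p n^1$. Writing $g = 6m$, the condition $g \equiv n \pmod 3$ becomes $n \equiv 0 \pmod 3$, the condition $gp \equiv 0 \pmod 3$ holds automatically, and since $g$ is even the quantity $gp\bigl(g(p-1) + 2n\bigr)$ is divisible by $4$ for free; the only surviving constraint is $0 \le n \le g(p-1)/2$. So the content of the lemma is that feasibility is also sufficient, with a single genuine exception. That exception, $(g,p,n) = (6,4,0)$, corresponds to the type $6^4$, whose nonexistence is already recorded in Lemma~\ref{lemma.4gdd.g^p}, so I would isolate it at the outset and exclude it from all later constructions. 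I would also prove the base layer $g=6$ first (this is the Ge--Rees / Wei--Ge result on type $6^p m^1$) and then bootstrap from it to general $g = 6m$.

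Next I would set up the main recursive engine, Wilson's Fundamental Construction. Take a transversal design TD$(p+1,s)$, truncate one group to $j$ points to obtain a $\{p,p+1\}$-GDD of type $s^p j^1$, and assign weights to its points. Giving every point of the first $p$ groups weight $6$ and every retained point of the truncated group a weight in $\{3,6\}$, each block of size $p+1$ is replaced by a $4$-GDD of type $6^{p+1}$ or $6^p 3^1$, and each block of size $p$ (missing the truncated group) by a $4$-GDD of type $6^p$. The types $6^p$ and $6^{p+1}$ exist by Lemma~\ref{lemma.4gdd.g^p} whenever $p \ge 4$, and $6^p 3^1$ is supplied by the $g=6$ base layer. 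The output is a $4$-GDD of type $(6s)^p N^1$, where $N$ can be made equal to any multiple of $3$ in $[0,6s]$ by choosing how many retained points carry weight $3$ versus $6$. Taking $s = m$ then realises $g = 6m$ together with every feasible $n$ in the \emph{small} range $0 \le n \le g$.

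The remaining, and harder, task is to cover the \emph{large} range $g \le n \le g(p-1)/2$, since uniform weighting only reaches $n$ up to $g$. Here I would turn to $\{4\}$-frames and a filling construction: starting from a suitable $\{4\}$-frame of type $(6m)^p$ and adjoining a fixed set of $n$ further points, one fills in each group together with the adjoined points using smaller $4$-GDDs to absorb the extra points, producing type $(6m)^p n^1$ for $n$ reaching up to the bound $g(p-1)/2$. The two regimes are designed to overlap near $n = g$, so that between weighting and filling every feasible $n$ is attained. A finite collection of sporadic small cases --- small $p$, small $m$, and orders $s$ for which the required transversal design fails to exist --- would then be dispatched by direct or computer constructions, exactly as in the cited works of Forbes and of Wei and Ge.

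I expect the main obstacle to be precisely this interplay of flexibility and availability. On one hand the value of $n$ must be varied over all multiples of $3$ up to $g(p-1)/2$, which forces the mixed weighting scheme and the overlapping frame argument rather than a single clean construction; on the other hand the recursive step depends on the existence of TD$(p+1,s)$, equivalently of $p-1$ mutually orthogonal Latin squares of order $s$, which fails or is unsettled for small or exceptional orders (notably $s=6$). Handling these gaps --- and verifying that the sporadic list of direct constructions genuinely closes every residual case apart from $6^4$ --- is where the real work of the three cited papers lies.
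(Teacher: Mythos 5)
Your proposal cannot really be compared against ``the paper's own proof'' because the paper offers none: this lemma is imported wholesale from the literature, with the citations \cite{Forbes1, WeiGe.4gdd.gpn1.g0mod6, gegum} attached to the statement serving as the entire justification. So the question is whether your sketch stands on its own as a proof, and it does not. Your opening reduction (that under $g \equiv 0 \pmod 6$ the hypotheses are exactly the necessary conditions, and that the lone exception $6^4$ is the one recorded in Lemma~\ref{lemma.gp}) is correct, but every genuinely hard ingredient --- the base layer $g=6$, the whole large-$n$ regime, and the sporadic cases where transversal designs are missing --- is explicitly deferred to ``the cited works of Forbes and of Wei and Ge,'' i.e.\ to the very papers the lemma already cites. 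As a proof that is circular; as an account of provenance it only restates the citation, which is what the paper does in one line.

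There are also two concrete defects in the portions you do argue. First, your claim that ``the types $6^p$ and $6^{p+1}$ exist by Lemma~\ref{lemma.gp} whenever $p \ge 4$'' is false: $(g,p)=(6,4)$ is an exception of that lemma, so the $4$-GDD of type $6^4$ does not exist --- a fact you yourself invoked one paragraph earlier to explain the lemma's unique exception. Consequently your Wilson-construction step collapses exactly at $p=4$: once the TD$(5,s)$ is truncated, it has blocks of size $4$, and these would have to be replaced by the nonexistent $4$-GDD of type $6^4$. Second, the large-$n$ mechanism is not just vague but wrong as described. Blocks of a $\{4\}$-frame already have size $4$, so adjoined points cannot be absorbed into them; and ``filling in each group together with the $n$ adjoined points'' is Construction~\ref{construction.fillin}, which replaces each filled group by the groups of its fill-in design, so it cannot output type $(6m)^p n^1$ with the original groups intact --- and in any case condition~4 of Theorem~\ref{necessary} forces any fill-in $4$-GDD on $g+n$ points having a group of size $n$ to satisfy $3n+3 \le g+n$, i.e.\ $n \le (g-3)/2$, nowhere near the target $g(p-1)/2$. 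The device that actually reaches the upper bound in the cited papers is different: new points are added to the holey parallel classes of Kirkman ($3$-)frames, turning triples into quadruples, with incomplete GDDs used for the final fills. That machinery is entirely absent from your outline, so the range $g < n \le g(p-1)/2$ is not covered by anything you construct.
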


\begin{lemma}{\rm\cite{Forbes2, gegum}} 
\label{3pn1}
\label{lemma.gpn1.g3mod6}
Suppose that $g \equiv 3 \pmod{6},$ $g < 141,$ $p \geq 4$ and one of the following conditions hold$:$ 
\begin{itemize}
    \item $p \equiv 0\pmod{4}$ and $n \equiv 0\pmod{3}$ where $0 \leq n \leq (g(p-1)-3)/2,$ or
    \item $p \equiv 1\pmod{4}$ and $n \equiv 0\pmod{6}$ where $0 \leq n \leq g(p-1)/2,$ or
    \item $p \equiv 3\pmod{4}$ and $n \equiv 3\pmod{6}$ where $3 \leq n \leq g(p-1)/2$.
\end{itemize}
Then there exists a $4$-GDD of type $g^p n^1$.
\end{lemma}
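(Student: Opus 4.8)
The stated congruences and range restrictions on $(p,n)$ are exactly the necessary conditions of Theorem~\ref{necessary} specialised to $g \equiv 3 \pmod 6$, so the content of the lemma is \emph{sufficiency}. Since $g$ is odd with $g \equiv 0 \pmod 3$, condition~(2) of Theorem~\ref{necessary} forces $n \equiv 0 \pmod 3$, while the mod~$4$ condition in the form given by Lemma~\ref{lemma.4gdd.number.of.odd.sized.groups} (the number of odd groups, namely $p$ or $p+1$, must be $\equiv 0$ or $1 \pmod 4$) both rules out $p \equiv 2 \pmod 4$ entirely and sharpens the admissible holes to $n \equiv 0 \pmod 6$ when $p \equiv 1$ and $n \equiv 3 \pmod 6$ when $p \equiv 3$. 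The plan is therefore to split the argument by $p \bmod 4 \in \{0,1,3\}$ and, within each class and for each of the $23$ values $g \in \{3,9,15,\ldots,135\}$, to realise every admissible $n$ in its congruence class up to $g(p-1)/2$.

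My main engine would be Wilson's Fundamental Construction applied to a (possibly truncated) transversal design. Starting from a $\mathrm{TD}(p+1,t)$ with one group truncated to $u$ points and weighting every point by $g$, each full block calls for an ingredient $4$-GDD of type $g^{p+1}$ and each truncated block for one of type $g^{p}$; both are supplied by Lemma~\ref{lemma.gp} in the relevant residue classes. This yields $4$-GDDs of type $(tg)^{p}(ug)^1$, so that the transversal dimension controls the number of groups while the truncation size $u$ controls the hole. I would supplement this with two auxiliary tools: inflation, multiplying a $4$-GDD of type $3^{p}{n'}^1$ (the thoroughly understood $g=3$ case) by a $\mathrm{TD}(4,g/3)$ — which exists for every $g \equiv 3 \pmod 6$ with $g \ge 9$ since then $g/3$ is odd — to obtain type $g^{p}(gn'/3)^1$; and ``filling in groups'', where an augmented group of a larger design is replaced either by a catalogued small $4$-GDD (Theorem~\ref{gddlt30} and the $v \le 50$ results) or by a type $g^{q}$ design from Lemma~\ref{lemma.gp}.

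With these tools, the two ``dials'' — the number of groups $p$ and the hole $n$ — can each be swept out recursively, so that for every fixed $g$ and every residue class mod~$4$ the problem collapses to a bounded set of residual cases: a handful of small values such as $p \in \{4,5,7,8,9,11\}$, together with the extreme holes where the filling increments (multiples of $3$ or $6$) bottom out. These residual seed designs I would construct directly, typically as difference families over a group such as $\Z_{gp}$ or by explicit ad hoc layouts, one collection for each admissible $g$.

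The hard part will be precisely these base cases. The configurations with the \emph{smallest} $p$ together with the \emph{largest} admissible hole $n \approx g(p-1)/2$ are the most delicate, since there the recursive machinery has the least room to operate and the requisite ingredient and incomplete transversal designs are scarcest; each must be settled by a bespoke construction. Indeed, the hypothesis $g < 141$ is almost surely an artefact of exactly this step, marking the range over which the seed designs and the incomplete TDs feeding the weighting have been verified to exist. Completing the proof thus reduces to a finite but substantial case analysis confirming that every one of the $23$ values of $g$ admits the needed ingredients.
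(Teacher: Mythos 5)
The first thing to note is that the paper does not prove this lemma at all: it is imported verbatim from Forbes~\cite{Forbes2} and Wei and Ge~\cite{gegum}, which is why the statement carries those citations and the restriction $g < 141$ (the range settled in those references). So your attempt can only be judged against the literature it is implicitly reconstructing, and judged that way it is a plan rather than a proof. Your derivation of the necessary conditions is correct (via Lemma~\ref{lemma.4gdd.number.of.odd.sized.groups}, ruling out $p \equiv 2 \pmod{4}$ and splitting $n$ into the three stated congruence classes), and the toolkit you list --- weighting truncated transversal designs, inflation by a TD$(4,g/3)$, filling in groups --- is indeed the kind of machinery those references use. But everything that makes the lemma true is deferred: the ``finite but substantial case analysis'' of seed designs is exactly where the content lies, and in the cited works it amounts to a very large number of explicit direct constructions spread over the $23$ values of $g$; none is exhibited here.

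There is also a concrete step that fails as stated. Your main engine --- give every point of a truncated TD$(p+1,t)$ weight $g$ --- needs ingredient $4$-GDDs of both types $g^{p+1}$ (full blocks) and $g^{p}$ (truncated blocks). For $g \equiv 3 \pmod{6}$, Lemma~\ref{lemma.gp} supplies type $g^{q}$ only for $q \equiv 0$ or $1 \pmod{4}$, so the two ingredients coexist only when $p \equiv 0 \pmod{4}$; for $p \equiv 1$ or $3 \pmod{4}$ one of the two block sizes cannot be filled, which are precisely two of the three cases the lemma covers. Moreover the output of that construction has type $(tg)^{p}(ug)^{1}$: after filling the groups of size $tg$ the number of groups becomes $tp$ rather than $p$, and the holes reachable this way (or by your inflation step) are multiples of $g$ (respectively $g/3$), nowhere near sweeping all $n \equiv 0 \pmod{3}$ or $n \equiv 3 \pmod{6}$ up to $g(p-1)/2$. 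Closing these gaps --- ingredients in the right residue classes, finer control of the hole size, and the explicit seed designs --- is precisely the work done in \cite{Forbes2, gegum}, and it is absent from the proposal.
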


\begin{lemma}{\rm \cite{GeLing.2004.gum1}} 
\label{lemma.4pn1}
There exists a $4$-GDD of type~$4^p n^1$ if $p \geq 4,$ $p \equiv 0 \pmod{3},$ $n \equiv 1 \pmod{3}$ and $1 \leq n \leq 2(p-1)$.

\begin{proof}
If $p \in \{6, 9\}$, then the $4$-GDD exists by \cite[Lemma~3.17]{GeLing.2004.gum1}.
If $p \geq 12$, then the $4$-GDD exists by \cite[Theorem~3.18]{GeLing.2004.gum1}.
\end{proof}
\end{lemma}

\begin{lemma}{\rm\cite{gezhu, schuster.2014.gum1, WeiGe.2013.more.gum1}} 
\label{lemma.7pn1}
There exists a $4$-GDD of type~$7^p n^1$ if $p \geq 4$ and one of the following conditions hold$:$
\begin{itemize}
    \item $p \equiv 0 \pmod{12}$ and $n \equiv 1 \pmod{3}$ where $1 \leq n \leq (7(p-1)-3)/2,$ or
    \item $p \equiv 3 \pmod{12}$ and $n \equiv 1 \pmod{6}$ where $1 \leq n \leq 7(p-1)/2,$ or
    \item $p \equiv 9 \pmod{12}$ and $n \equiv 4 \pmod{6}$ where $4 \leq n \leq 7(p-1)/2$.
\end{itemize}

\begin{proof}
If $p = 9$, then $n\in \{4,10,16,22,28\}$, so the $4$-GDD exists by \cite[Lemma~2.18]{schuster.2014.gum1} or \cite[Lemma~3.14]{WeiGe.2013.more.gum1}.
If $p \geq 12$ and $n \in \{1,4\}$, then the $4$-GDD exists by \cite[Theorem~2.11]{gezhu}.
If $p \geq 12$ and $n \geq 7$, then the $4$-GDD exists by \cite[Theorem~3.15]{WeiGe.2013.more.gum1}.
\end{proof}
\end{lemma}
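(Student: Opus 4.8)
The plan is to split the proof into a necessity half and a sufficiency half. Necessity should be routine: I would specialise Theorem~\ref{necessary} to the type $7^p n^1$, set $v = 7p + n$, and check that the three listed cases are exactly what the necessary conditions yield. The sufficiency is where the real work lies, and I would handle it not by a self-contained construction but by reducing the claim to published recursive families of $4$-GDDs, organised by the residue of $p$ modulo $12$ and by whether $n$ is small or large.

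For necessity I would proceed condition by condition. Condition~(2), in the form of Lemma~\ref{lemma.4gdd.necessary.condition.alternative.group.sizes.mod3}, requires all group sizes to be congruent modulo $3$; since $7 \equiv 1 \pmod 3$ this forces $n \equiv 1 \pmod 3$, and since $g_1 = 7 \not\equiv 0 \pmod 3$ the alternative ``$m \equiv 1 \pmod 3$'' applies to the $p+1$ groups, giving $p \equiv 0 \pmod 3$, hence $p \equiv 0,3,6,9 \pmod{12}$. Next I would invoke condition~(3) in the form of Lemma~\ref{lemma.4gdd.number.of.odd.sized.groups}: the $p$ groups of size $7$ are odd, and the parity bookkeeping both eliminates $p \equiv 6 \pmod{12}$ and fixes the parity of $n$, upgrading $n \equiv 1 \pmod 3$ to $n \equiv 1 \pmod 6$ when $p \equiv 3 \pmod{12}$ and to $n \equiv 4 \pmod 6$ when $p \equiv 9 \pmod{12}$. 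Finally, condition~(4) applied to the largest group gives $3n + 7 \le v$, i.e. $n \le 7(p-1)/2$; combined with the congruence on $n$ this sharpens to the stated bound $\bigl(7(p-1)-3\bigr)/2$ precisely when $p \equiv 0 \pmod{12}$, where $7(p-1)/2$ fails to be an integer, while in the cases $p \equiv 3,9 \pmod{12}$ the value $7(p-1)/2$ is already an integer of the correct residue and needs no adjustment.

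For sufficiency, the idea is that every admissible pair $(p,n)$ is realised by one of a short list of known constructions. Taking the statement's hypothesis $p \ge 4$ together with the congruences above, the smallest admissible values of $p$ are $9$, $12$ and $15$ in the three cases, so $p = 9$ is the only admissible value below $12$. I would therefore first dispatch $p = 9$, where $n \in \{4,10,16,22,28\}$, using the small-design constructions of~\cite{schuster.2014.gum1, WeiGe.2013.more.gum1}. For $p \ge 12$ I would then separate the stem values $n \in \{1,4\}$, supplied by~\cite{gezhu}, from the larger values $n \ge 7$, supplied by~\cite{WeiGe.2013.more.gum1}, checking in each case that the cited hypotheses are met and that their $n$-ranges together cover every value permitted by necessity.

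I expect the main obstacle to be the sufficiency, since it rests entirely on the intricate recursive machinery (Wilson-type fillings, inflation by transversal designs, and GDD-to-GDD constructions) developed in the cited papers rather than on any single elementary argument. The delicate point is not any one construction but the \emph{coverage} check: one must verify that the union of the cited families leaves no admissible $(p,n)$ uncovered, with particular care at the endpoints of each $n$-range and at the boundary between the small-$n$ regime $n \in \{1,4\}$ and the large-$n$ regime $n \ge 7$. Making those references interlock without gaps is the crux of the proof.
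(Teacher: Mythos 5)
Your proposal is correct and follows essentially the same route as the paper: necessity by specialising Theorem~\ref{necessary} (via Lemmas~\ref{lemma.4gdd.necessary.condition.alternative.group.sizes.mod3} and~\ref{lemma.4gdd.number.of.odd.sized.groups}), and sufficiency split into exactly the paper's three cases --- $p=9$ with $n\in\{4,10,16,22,28\}$ from \cite{schuster.2014.gum1, WeiGe.2013.more.gum1}, $p\ge 12$ with $n\in\{1,4\}$ from \cite{gezhu}, and $p\ge 12$ with $n\ge 7$ from \cite{WeiGe.2013.more.gum1}. Your necessity discussion is in fact more detailed than the paper's one-line appeal to Theorem~\ref{necessary}, but the substance is identical.
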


\begin{lemma}{\rm\cite{Forbes3, schuster.2014.gum1}}
\label{lemma.4gdd.28p_n1}
There exists a $4$-GDD of type~$28^p n^1$ if $p \geq 4,$ $p \equiv 0 \pmod{3},$ $n \equiv 1 \pmod{3}$ and $n \leq 14(p-1)$.

\begin{proof}
Constructions for types~$28^9 19^1$, $28^9 25^1$ and~$28^9 31^1$ are given in \cite[Lemma~2.6]{Forbes3}.
Otherwise, they exist by \cite[Theorem~5.17]{schuster.2014.gum1}.
\end{proof}
\end{lemma}


Other work on the existence of $4$-GDDs has concentrated on whether the group sizes are congruent to $0$, $1$ or $2 \pmod{3}$ and on GDDs whose groups are of only two or three different sizes. 
See, for example,~\cite{ABBC.2t8s,ABBC.2t5s,ABBCF.1mod3,ABC.30less,ABC.50less,ABC.3562,BSH,Forbes3,Forbesgcc,gegum}.

\subsection{Some known enumeration results}

In \cite{kreher.210note}, the $4$-GDDs of type~$2^{10}$ were enumerated; up to isomorphism there are five different such $4$-GDDs and each has a non-trivial automorphism group. 
The automorphism groups have orders~$8$, $12$, $16$, $72$ and~$720$.
The $4$-GDDs of type $7^4$ were enumerated in \cite{ABBCF.1mod3}; up to isomorphism there are seven different such $4$-GDDs and, as in the case of the designs of type~$2^{10}$, each has a non-trivial automorphism group. Two of the $4$-GDDs of type $7^4$ have automorphism groups of order~$6$ and the remaining ones have orders~$24$, $48$, $126$, $2352$ and~$3528$.  
In~\cite{ABC.3562}, the $4$-GDDs of type~$3^5 6^2$ were enumerated and only~$3$ out of~$22$ had any non-trivial automorphisms.
Of those that did have non-trivial automorphisms, one had an automorphism group of order~$2$ and two had automorphism groups of order~$3$.

\subsection{Some direct constructions}
\label{s:some.direct.constructions}
In the past, several GDDs have been found directly by assuming the existence of a cyclic automorphism group, $\mathbb{G}$. 
See  for instance~\cite{ABBC.2t8s,ABBC.2t5s,ABBCF.1mod3,ABC.30less,ABC.50less,ABC.3562,Forbes2,Forbes3,Forbesgcc,Forbes1,WeiGe.4gdd.gpn1.g0mod6,gegum}.  
In these designs, one assigns a selection of base blocks and then develops these base blocks to produce the blocks of the required design.
For the $4$-GDDs constructed directly in this paper, the point set of the design consists of one or more copies of the group~$\mathbb{G}$
(here, $\mathbb{G}$ is $\mathbb{Z}_5$, $\mathbb{Z}_6$, $\mathbb{Z}_7$ or $\mathbb{Z}_{14}$)
plus possibly a few copies of $\mathbb{Z}_2$ or $\mathbb{Z}_3$ and possibly one or more infinite points.  
Blocks in these designs are obtained by developing the subscripts of the points from copies of $\mathbb{G}$ over $\mathbb{G}$;
the infinite points remain unaltered when developed.  
When the point set includes any extra copies of $\mathbb{Z}_2$ or $\mathbb{Z}_3$, those points are developed over $\mathbb{Z}_2$ or $\mathbb{Z}_3$ as the others are developed over~$\mathbb{G}$.
Also, there are usually a few blocks which remain invariant when some nonzero element of $\mathbb{G}$ is added to it;
the number of blocks generated by any one of those blocks is less than the size of $\mathbb{G}$.

As an example, we give a $4$-GDD of type~$1^3 4^1 7^6$ from
\cite[Table~27]{ABBCF.1mod3} using this method.
The points are  
$a_i,b_i,c_i,d_i,e_i,f_i,g_i$ for $i \in \mathbb{Z}_6$, $y_i$ for $i \in \mathbb{Z}_3$, and $\infty_1,$ $\infty_2,$ $\infty_3,$ $\infty_4$.  
The groups are  $\{a_i,b_i,c_i,d_i,e_i,f_i,g_i\}$ for $i \in \mathbb{Z}_6$, $\{y_i\}$  for $i \in \mathbb{Z}_3$ and
  $\{\infty_1, \infty_2,\infty_3,\infty_4\}$.
The blocks are obtained by developing the blocks in the following array$\pmod{6}$. 
The six blocks in the first column generate three blocks each.
\[\small\begin{array}{|l|l|l|l|l|l|}\hline
    \{a_0,a_3,y_0,\infty_1\}  & \{a_0,a_1,b_2,e_3\}      & \{a_0,c_4,f_3,f_5\}      & \{b_0,b_1,d_3,f_2\}      & \{c_0,c_1,f_3,y_2\}      & \{d_0,f_2,f_3,g_1\}\\\hline
    \{b_0,b_3,y_0,\infty_2\}  & \{a_0,a_2,c_3,e_1\}      & \{a_0,d_5,g_2,y_1\}      & \{b_0,c_4,d_5,e_3\}      & \{c_0,c_2,g_4,g_5\}      & \{e_0,e_1,g_3,g_5\}\\\hline
    \{c_0,c_3,y_0,\infty_3\}  & \{a_0,b_3,c_2,d_1\}      & \{a_0,d_4,g_3,\infty_2\} & \{b_0,c_2,e_5,\infty_4\} & \{c_0,d_2,e_1,\infty_1\} & \\\hline
    \{d_0,d_3,y_0,\infty_4\}  & \{a_0,b_5,f_2,g_4\}      & \{a_0,e_4,f_1,y_2\}      & \{b_0,d_1,f_5,y_2\}      & \{c_0,e_2,f_4,\infty_2\} & \\\hline
    \{e_0,e_3,f_1,f_4\}       & \{a_0,b_4,g_1,\infty_3\} & \{a_0,f_4,g_5,\infty_4\} & \{b_0,e_2,e_4,y_1\}      & \{d_0,d_2,e_3,g_4\}      & \\\hline
    \{g_0,g_3,y_0,y_1\}       & \{a_0,c_5,d_2,d_3\}      & \{b_0,b_2,c_3,g_4\}      & \{b_0,f_4,g_1,\infty_1\} & \{d_0,e_2,f_1,\infty_3\} & \\\hline
  \end{array}\]

All $4$-GDDs that are constructed directly in this paper are given in the Appendix.

\section{Constructions of $4$-GDDs of type~$4^t 7^s$}
\label{s:4t7s.constructions}

One tool for constructing GDDs from other GDDs is that of `filling in groups'.  
This is given in Construction~\ref{construction.fillin} in the form applicable to $k$-GDDs.  
This construction is valid by~\cite[Theorem 1.4.12]{fmy}; see also the proof of that theorem.

\begin{construction} \label{construction.fillin}
Suppose that a $k$-GDD of group type $\{g_1, g_2, \ldots, g_m\}$ exists and let $u$ be a non-negative integer.
Suppose also that, for each integer $i=1,2, \ldots, m-1$, there exists a $k$-GDD of type
$\{g(i,1),g(i,2),\ldots,g(i,s_i),u\}$ for which $\sum_{j=1}^{s_i} g(i,j)=g_i$.
Then there exists a $k$-GDD of type $\{g(1,1), g(1,2),\ldots, g(1,s_1), g(2,1), g(2,2),\ldots, g(2,s_2),\ldots, g(m-1,1), g(m-1,2),\ldots, g(m-1,s_{m-1}), g_m + u\}$.
If further, there exists a $k$-GDD on $g_m + u$ points of type  $\{g(m,1), g(m,2), \ldots,$ $g(m,s_m)\}$,
then there exists a $k$-GDD of type $\{g(1,1), g(1,2), \ldots, g(1,s_1), g(2,1), 
g(2,2), \ldots, g(2,s_2),$ $\ldots, g(m,1), g(m,2), \ldots, g(m,s_m)\}$.
\end{construction}

Another tool for constructing GDDs from other GDDs is {\it Wilson's fundamental GDD construction}~\cite{Wilson}.
A version of this construction is given in Construction~\ref{construction.Wilsons} in the form in which it will be used in Lemma~\ref{lemma.4gdd.410283.and.413283}.
This construction is valid by~\cite[Theorem 1.4.9]{fmy}; see also the proof of that theorem.

\begin{construction} \label{construction.Wilsons}
Suppose there exists a $k$-GDD of type~$g_1^{u_1} g_2^{u_2} \cdots g_m^{u_m}$ and there exists a $\operatorname{TD}(k,h)$. 
Then there exists a $k$-GDD of type~$(h g_1)^{u_1} (h g_2)^{u_2} \cdots (hg_m)^{u_m}$.
\end{construction}

In Lemma~\ref{lemma.necessary.conditions.4t7s}, we give the necessary conditions for the existence of a $4$-GDD of type~$4^t 7^s$.
However, we first need to consider the result in Lemma~\ref{lemma.4gdd.g1tg2s.tgeq4.or.sgeq4.or.t=s=3} which simplifies some of the necessary conditions in Theorem~\ref{necessary}, specifically when the $4$-GDD has up to two group sizes.

\begin{lemma} \label{lemma.4gdd.g1tg2s.tgeq4.or.sgeq4.or.t=s=3}
Suppose there exists a $4$-GDD of type $g_1^t g_2^s$. Then $t \geq 4$ or $s \geq 4$ or $t = s = 3$.

\begin{proof}
By Condition (1) in Theorem~\ref{necessary}, $t+s \geq 4$. 
If $t+s = 4$, then by Condition (5), either $t = 4$ or $s = 4$.
If $t+s = 5$, then by Condition (6), either $t = 4$ or $s = 4$.
If $t+s = 6$ and neither $t \geq 4$ nor $s \geq 4$, then $t = s = 3$.
If $t + s \geq 7$, then either $t \geq 4$ or $s \geq 4$.
\end{proof}
\end{lemma}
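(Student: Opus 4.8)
The plan is to establish the stated trichotomy directly by a case analysis on the total number of groups $m = t + s$, drawing on the necessary conditions already collected in Theorem~\ref{necessary}. The starting point is Condition~(1), which gives $m \geq 4$; this reduces the problem to understanding the four regimes $m = 4$, $m = 5$, $m = 6$, and $m \geq 7$. In each regime I will show that either one of the exponents $t,s$ is already at least $4$, or else we are forced into the sole remaining configuration $t = s = 3$.

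I would begin with the two smallest cases, where the specialized conditions of Theorem~\ref{necessary} apply most sharply. For $m = 4$, Condition~(5) forces all four groups to share a common size. Because the notation $g_1^t g_2^s$ records two distinct sizes $g_1 \ne g_2$, this common-size conclusion is consistent only if one of the two sizes fails to occur at all, so $\{t,s\} = \{0,4\}$ and hence $t \geq 4$ or $s \geq 4$. For $m = 5$, Condition~(6) forces the type into the shape $h^4 n^1$, meaning that one size is used four times and the other exactly once; thus $\{t,s\} = \{1,4\}$, again giving $t \geq 4$ or $s \geq 4$.

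The remaining regimes are purely combinatorial and require no further structural input. For $m = 6$, if neither $t \geq 4$ nor $s \geq 4$ holds, then $t \leq 3$ and $s \leq 3$, and the equation $t + s = 6$ leaves only $t = s = 3$, which is exactly the third admissible outcome. For $m \geq 7$, the inequality $t + s \geq 7$ forces $\max\{t,s\} \geq 4$ by a one-line pigeonhole count, so again $t \geq 4$ or $s \geq 4$. I do not anticipate any genuine obstacle here; the only point deserving care is the $m = 4$ step, where one must use that the two sizes in $g_1^t g_2^s$ are genuinely distinct in order to convert Condition~(5)'s ``all groups equal'' into the statement that one of the exponents vanishes. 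Everything else is elementary arithmetic.
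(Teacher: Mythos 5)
Your proof is correct and takes essentially the same route as the paper's: a case analysis on $m = t+s$ using Condition (1) of Theorem~\ref{necessary} to get $m \geq 4$, Conditions (5) and (6) to dispose of $m = 4$ and $m = 5$, and elementary counting for $m = 6$ and $m \geq 7$. The only difference is that you make explicit the role of the distinctness of $g_1$ and $g_2$ (e.g.\ forcing $\{t,s\}=\{0,4\}$ when $m=4$), a point the paper's terser proof leaves implicit.
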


\begin{lemma} 
\label{lemma.necessary.conditions.4t7s}
If a $4$-GDD of type $4^t 7^s$ exists then all of the following conditions hold$:$
\begin{itemize}
    \item $t + s \equiv 1 \pmod{3};$
    \item $s \equiv 0$ or $1 \pmod{4};$ and
    \item either $t \geq 4$ or $s \geq 4$.
\end{itemize}

\begin{proof}
Firstly, note that a $4$-GDD of type~$4^t 7^s$ has $t+s$ groups.
Since $4 \not\equiv 0 \pmod{3}$ and $7 \not\equiv 0 \pmod{3}$, it follows from
Lemma~\ref{lemma.4gdd.necessary.condition.alternative.group.sizes.mod3} that $t+s \equiv 1 \pmod{3}$.

Next, observe that the only groups of odd size are the groups of size~$7$.
Hence, by Lemma~\ref{lemma.4gdd.number.of.odd.sized.groups}, it follows that $s \equiv 0$ or $1 \pmod{4}$.

Finally, consider when $t = s = 3$. Then $t + s = 6$ which contradicts $t+s \equiv 1 \pmod{3}$.
Hence, by Lemma~\ref{lemma.necessary.conditions.4t7s}, it follows that either $t \geq 4$ or $s \geq 4$.
\end{proof}
\end{lemma}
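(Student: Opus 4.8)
The plan is to derive each of the three listed conditions directly from the general necessary conditions of Theorem~\ref{necessary}, translated into the more convenient forms supplied by the two preceding equivalence lemmas. The three observations that drive the whole argument are elementary: a $4$-GDD of type $4^t 7^s$ has exactly $m = t+s$ groups; both permitted group sizes satisfy $4 \equiv 7 \equiv 1 \pmod 3$; and the only groups of odd size are the $s$ groups of size $7$. With these in hand, each required condition follows by specialising a known general statement to the present two sizes.

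First I would obtain the congruence $t+s \equiv 1 \pmod 3$. Since every group size is congruent to $1$ modulo $3$, the hypothesis $g_1 \equiv g_2 \equiv \cdots \equiv g_m \pmod 3$ of Lemma~\ref{lemma.4gdd.necessary.condition.alternative.group.sizes.mod3} holds automatically; and because $1 \not\equiv 0 \pmod 3$, the alternative form of the mod-$3$ condition given by that lemma forces $m \equiv 1 \pmod 3$, that is, $t + s \equiv 1 \pmod 3$.

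Next I would handle the divisibility condition on $s$ together with the size restriction. As the $s$ groups of size $7$ are the only odd-sized groups, the number of odd-sized groups equals $s$, so Lemma~\ref{lemma.4gdd.number.of.odd.sized.groups} (the alternative form of Condition~(3)) immediately yields $s \equiv 0$ or $1 \pmod 4$. For the final condition I would invoke Lemma~\ref{lemma.4gdd.g1tg2s.tgeq4.or.sgeq4.or.t=s=3}, which asserts that $t \geq 4$, or $s \geq 4$, or $t = s = 3$; the only work is to eliminate the case $t = s = 3$, where $t + s = 6 \equiv 0 \pmod 3$, contradicting the congruence $t+s \equiv 1 \pmod 3$ established above. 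Hence $t \geq 4$ or $s \geq 4$.

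I do not anticipate any genuine obstacle, since the entire argument is a routine specialisation of already-established general conditions to the two admissible group sizes. The one point requiring care is the interplay of the last two steps: ruling out $t = s = 3$ depends on the mod-$3$ conclusion rather than on Lemma~\ref{lemma.4gdd.g1tg2s.tgeq4.or.sgeq4.or.t=s=3} in isolation, so the conditions should be derived in an order (or cross-referenced) that makes this dependence explicit.
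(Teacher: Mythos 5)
Your proposal is correct and follows essentially the same route as the paper's own proof: both derive $t+s \equiv 1 \pmod{3}$ from Lemma~\ref{lemma.4gdd.necessary.condition.alternative.group.sizes.mod3} (using $4 \equiv 7 \equiv 1 \not\equiv 0 \pmod{3}$), both get $s \equiv 0$ or $1 \pmod{4}$ from Lemma~\ref{lemma.4gdd.number.of.odd.sized.groups} since the size-$7$ groups are the only odd ones, and both obtain the last condition by combining Lemma~\ref{lemma.4gdd.g1tg2s.tgeq4.or.sgeq4.or.t=s=3} with the mod-$3$ congruence to exclude $t=s=3$. In fact you cite the correct lemma for that final step, whereas the paper's proof contains a self-referential typo (it cites Lemma~\ref{lemma.necessary.conditions.4t7s} instead of Lemma~\ref{lemma.4gdd.g1tg2s.tgeq4.or.sgeq4.or.t=s=3}).
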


\begin{lemma}
\label{lemma.4gdd.4t7s.v4or7mod12}
Suppose that $v = 4t+7s$.
Then the conditions that $t + s \equiv 1 \pmod{3}$ and $s \equiv 0 \pmod{4}$ are equivalent to the condition that $v \equiv 4 \pmod{12}$.
Similarly, the conditions that $t + s \equiv 1 \pmod{3}$ and $s \equiv 1 \pmod{4}$ are equivalent to the condition that $v \equiv 7 \pmod{12}$.

\begin{proof}
Firstly, note that $t + s \equiv 1 \pmod{3}$ implies that $4t + 4s \equiv 1 \pmod{3}$, so $v \equiv 4t + 7s \equiv 1 \pmod{3}$.
Also, observe that $s \equiv 0$ or $1 \pmod{4}$ implies that $3s \equiv 0$ or $3 \pmod{4}$, so $v \equiv 4t + 7s \equiv 0$ or $3 \pmod{4}$.
If $v \equiv 1 \pmod{3}$ and $v \equiv 0 \pmod{4}$, then $v \equiv 4 \pmod{12}$.
If $v \equiv 1 \pmod{3}$ and $v \equiv 3 \pmod{4}$, then $v \equiv 7 \pmod{12}$.

Now, note that $v \equiv 4t+7s \equiv t+s \pmod{3}$ and that $v \equiv 4t+7s \equiv -s \pmod{4}$.
If $v \equiv 4 \pmod{12}$ then $v \equiv 1 \pmod{3}$ and $v \equiv 0 \pmod{4}$.
This means that $t+s \equiv 1 \pmod{3}$ and $-s \equiv 0 \pmod{4}$, so $s \equiv 0 \pmod{4}$.
If $v \equiv 7 \pmod{12}$ then $v \equiv 1 \pmod{3}$ and $v \equiv 3 \pmod{4}$.
This means that $t+s \equiv 1 \pmod{3}$ and $-s \equiv 3 \pmod{4}$, so $s \equiv 1 \pmod{4}$.
\end{proof}
\end{lemma}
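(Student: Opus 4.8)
The plan is to prove both biconditionals at once by decoupling the modulus $12$ into its coprime factors $3$ and $4$, since by the Chinese Remainder Theorem a residue class modulo $12$ is determined by, and determines, the pair of residue classes modulo $3$ and modulo $4$. So I would first record two ``master'' congruences that hold for every choice of $t$ and $s$, and then simply read off each of the two claimed equivalences from them.

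Concretely, I would begin by reducing the coefficients $4$ and $7$ separately modulo $3$ and modulo $4$. Since $4 \equiv 7 \equiv 1 \pmod{3}$, we get $v = 4t + 7s \equiv t + s \pmod{3}$; and since $4 \equiv 0$ and $7 \equiv -1 \pmod{4}$, we get $v \equiv -s \pmod{4}$, equivalently $v \equiv 3s \pmod{4}$. These two identities are the whole engine of the proof, and they are valid unconditionally, which is what lets me handle both directions of each equivalence simultaneously.

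From here each equivalence is immediate. For the first: the hypotheses $t+s \equiv 1 \pmod{3}$ and $s \equiv 0 \pmod{4}$ are, via the master congruences, exactly the statements $v \equiv 1 \pmod{3}$ and $v \equiv 0 \pmod{4}$, and by the Chinese Remainder Theorem this pair is equivalent to the single condition $v \equiv 4 \pmod{12}$. For the second: $t+s \equiv 1 \pmod{3}$ together with $s \equiv 1 \pmod{4}$ translate into $v \equiv 1 \pmod{3}$ and $v \equiv -1 \equiv 3 \pmod{4}$, which by the same argument is equivalent to $v \equiv 7 \pmod{12}$. Running these reductions in reverse (reading the master congruences the other way) recovers the hypotheses from the stated residues of $v$, giving the ``only if'' halves.

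I do not anticipate a genuine obstacle here: the content is entirely elementary modular arithmetic, and the only thing to be careful about is the bookkeeping of signs, in particular keeping $7 \equiv -1 \pmod{4}$ straight so that $v \equiv -s \pmod 4$ correctly pairs $s \equiv 0 \pmod 4$ with $v \equiv 0 \pmod 4$ and $s \equiv 1 \pmod 4$ with $v \equiv 3 \pmod 4$. Framing the argument around the two master congruences, rather than chasing four separate implications, is what keeps the write-up short and makes the role of the Chinese Remainder Theorem transparent.
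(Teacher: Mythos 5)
Your proof is correct and follows essentially the same route as the paper: both reduce $v = 4t+7s$ modulo $3$ and modulo $4$ (obtaining $v \equiv t+s \pmod{3}$ and $v \equiv -s \pmod{4}$) and then recombine the two residues into a residue modulo $12$. Your version merely streamlines the write-up by stating these two congruences once as biconditionals and invoking the Chinese Remainder Theorem explicitly, whereas the paper chases the forward and reverse implications separately; the content is identical.
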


We can now begin to construct $4$-GDDs of type~$4^t 7^s$.
Following from the result in Lemma~\ref{lemma.4gdd.4t7s.v4or7mod12}, we find it convenient to organise the constructions by the number of points $v = 4t+7s$.
Specifically, we organise the constructions by the value of~$v$ modulo~$84$ where $v \equiv 4$ or $7 \pmod{12}$.

Note that throughout this paper, the parameters~$t$ and~$s$ are always non-negative integers.

Firstly, in Lemmas~\ref{lemma.4gdd.410283.and.413283} and~\ref{lemma.4gdd.4^16_7^9}, we establish the existence of some $4$-GDDs that will be used for some of the constructions in Lemma~\ref{lemma.4t7s.vleq151}.

\begin{lemma}
\label{lemma.4gdd.410283.and.413283}
There exist $4$-GDDs of types~$4^{10} 28^3$ and~$4^{13} 28^3$.

\begin{proof}
For type~$4^{10} 28^3$, start with a $4$-GDD of type~$1^{10} 7^3$ which can be obtained by forming a block on each group of size~$4$ in a $4$-GDD of type $1^2 4^2 7^3$ which exists by \cite[Table~8]{ABC.50less}.
For type~$4^{13} 28^3$, start with a $4$-GDD of type~$1^{13} 7^3$ which can be obtained by forming a block on each group of size $4$ in a $4$-GDD of type~$1^1 4^3 7^3$ which exists by \cite[Table~3]{ABBCF.1mod3}.
Then, in both cases, apply Construction~\ref{construction.Wilsons} with $h=4$.
\end{proof}
\end{lemma}

\begin{lemma}
\label{lemma.4gdd.4^16_7^9}
There exist $4$-GDDs of types~$1^{64} 7^9$ and~$4^{16} 7^9$.

\begin{proof}
A $4$-GDD of type~$1^{64} 7^9$ is given in Table~\ref{16479}.
We use this $4$-GDD to construct a $4$-GDD of type~$4^{16} 7^9$.
This $4$-GDD has $16$~disjoint blocks that do not contain any points from the groups of size~$7$.
Nine of these blocks are obtained by adding $0,1,2, \ldots, 8$ to the base block $\{p_{0},p_{11},q_{0},r_{0}\}$. 
Three of these blocks are obtained by adding $0,1,2$ to the base block $\{q_{10},q_{16},r_{11},r_{18}\}$. 
The remaining four blocks are $\{p_{20},q_{8},r_{14},\infty\}$, $\{p_{19},q_{9},q_{20},r_{16}\}$, 
  $\{p_{10},r_{9},r_{15},r_{17}\}$ and  $\{q_{14},q_{15},q_{19},r_{8}\}$.
Removing these $16$~blocks from the $4$-GDD of type~$1^{64} 7^9$ gives a $4$-GDD of type~$4^{16} 7^9$.
\end{proof}
\end{lemma}

\begin{lemma} 
\label{lemma.4t7s.vleq151}
Suppose that $v = 4t+7s$ where $v \equiv 4$ or $7 \pmod{12}$ and either $t \geq 4$ or $s \geq 4$. 
Suppose also that $v \leq 151$.
Then there exists a $4$-GDD of type~$4^t 7^s$ except possibly when
\begin{itemize}
    \item $v =  76:$ $(t,s) = (12,4)$ or $(5,8);$
    \item $v =  79:$ $(t,s) = (11,5);$
    \item $v = 100:$ $(t,s) = (11,8);$
    \item $v = 103:$ $(t,s) = (17,5),$ $(10,9)$ or $(3,13);$
    \item $v = 115:$ $(t,s) = (20,5);$
    \item $v = 124:$ $(t,s) = (3,16);$
    \item $v = 127:$ $(t,s) = (9,13)$ or $(2,17);$
    \item $v = 139:$ $(t,s) = (19,9),$ $(12,13)$ or $(5,17).$
\end{itemize}

\begin{proof}
Constructions of these $4$-GDDs are given in Table~\ref{table.4t7s.vleq76} for $v \leq 76$ and in Table~\ref{table.4t7s.v79-151} for $79 \leq v \leq 151$.

Some of the $4$-GDDs in Table~\ref{table.4t7s.v79-151} are constructed using Construction~\ref{construction.fillin}.
Start with the given input $4$-GDD which exists by the reference given in the last column of the table.
Then, apply Construction~\ref{construction.fillin} using the listed value of~$u$.
The required fill-in $4$-GDDs are also given and references for these are given in Table~\ref{table.4t7s.vleq76}.

The $4$-GDDs in Table~\ref{table.4t7s.vleq76} and the remaining $4$-GDDs in Table~\ref{table.4t7s.v79-151} are constructed directly in this paper or are constructed elsewhere.
A reference is given for each of these $4$-GDDs.
The direct constructions in this paper can be found in the Appendix.
As explained in Section~\ref{s:some.direct.constructions}, these were found by assuming the existence of a cyclic automorphism group, $\mathbb{G}$, assigning a selection of base blocks, and then developing these base blocks to produce the blocks of the required design.
\end{proof}
\end{lemma}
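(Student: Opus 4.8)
The plan is to treat this as a finite enumeration: first list every feasible pair $(t,s)$ with $v = 4t + 7s \leq 151$, and then exhibit a construction for each one except those I cannot resolve, which become the stated possible exceptions. By Lemma~\ref{lemma.necessary.conditions.4t7s} (equivalently Lemma~\ref{lemma.4gdd.4t7s.v4or7mod12}), the feasible pairs are exactly those with $t + s \equiv 1 \pmod 3$, $s \equiv 0$ or $1 \pmod 4$, and $t \geq 4$ or $s \geq 4$. For each admissible $v \equiv 4$ or $7 \pmod{12}$ the equation $4t + 7s = v$ forces $s$ into a single residue class modulo $4$, so for fixed $v$ the candidate values of $s$ (and hence $t = (v - 7s)/4$) form a short arithmetic progression. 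This reduces the lemma to finitely many explicit target types, which I would organize by the value of $v$.

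For the small types, say $v \leq 76$, the strategy is direct construction. Following Section~\ref{s:some.direct.constructions}, I would search for cyclic base blocks over a group $\GG \in \{\Z_5, \Z_6, \Z_7, \Z_{14}\}$, together with a few short groups $\Z_2$, $\Z_3$ and possibly some infinite points, chosen so that developing the base blocks covers each cross-group point pair exactly once. Several of the smallest types are already settled by the known small-GDD results (Theorem~\ref{gddlt30}) or by the uniform and $g^p n^1$ families (Lemmas~\ref{lemma.gp}, \ref{lemma.4pn1}, \ref{lemma.7pn1}), and I would cite these rather than reconstruct them. The outcome of this stage is a catalogue of small $4$-GDDs of type $4^a 7^b u^1$ that double as fill-in ingredients.

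For the larger types, $79 \leq v \leq 151$, the main tool is Construction~\ref{construction.fillin}. I would start from a master $4$-GDD whose groups are large and uniform --- drawn from the $28^p n^1$ family of Lemma~\ref{lemma.4gdd.28p_n1}, the $7^p n^1$ family of Lemma~\ref{lemma.7pn1}, the $4^p n^1$ family of Lemma~\ref{lemma.4pn1}, or the $g \equiv 0 \pmod 6$ and $g \equiv 3 \pmod 6$ families of Lemmas~\ref{lemma.gpn1.0mod6} and \ref{lemma.gpn1.g3mod6} --- adjoin $u$ ideal points, and break each large group into $4$'s and $7$'s by filling it with a small $4$-GDD of type $4^a 7^b u^1$ from the catalogue above, optionally also filling the final group of size $g_m + u$. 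Choosing the master type, the value of $u$, and the fill-in types so that the result is exactly the target $4^t 7^s$ is a bookkeeping exercise; I would record, for each target, the master GDD, the value of $u$, and the fill-in GDDs used, with a reference for every ingredient.

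The main obstacle is coverage rather than any single deep step: every feasible pair must receive a construction, and the difficulty concentrates in the cases where the infinite-family master GDDs either fail to exist for the available parameters or do not admit a group decomposition into the needed $4$'s and $7$'s, and where the direct cyclic search over the small groups turns up no admissible set of base blocks. These stubborn pairs are precisely the ones I expect to be unable to settle, and they constitute the listed possible exceptions. The most labour-intensive part is verifying that each direct construction is correct --- that the developed base blocks satisfy condition (ii) of the GDD definition, meeting every pair of points from distinct groups exactly once --- which I would carry out by computer and relegate to the Appendix.
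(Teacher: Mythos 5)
Your proposal takes essentially the same route as the paper: organise the feasible pairs $(t,s)$ by $v=4t+7s$, settle small types by direct cyclic constructions over $\Z_5$, $\Z_6$, $\Z_7$, $\Z_{14}$ (verified by computer) together with citations to known designs, and obtain the remaining types via Construction~\ref{construction.fillin} applied to master designs from the $g^p$, $4^pn^1$, $7^pn^1$, $28^pn^1$ and $g\equiv 0,3\pmod{6}$ families, with the unresolved pairs becoming the listed possible exceptions. This is precisely the bookkeeping the paper records in Tables~\ref{table.4t7s.vleq76} and~\ref{table.4t7s.v79-151} and in the Appendix constructions.
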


\begin{table}[ht!]
\caption{Constructions for $4$-GDDs of type $4^t 7^s$ with $v \leq 76$ points in Lemma~\ref{lemma.4t7s.vleq151}.}
\label{table.4t7s.vleq76}
{\noindent
\begin{center}
  \begin{tabular}{|c|l|l|c|l|l|} \hline
    $v$ & Types & Reference & $v$ & Types & Reference \\ \hline
    $16$ & $4^4$     & \cite[Lemma~6.13]{BSH}              & $55$ & $4^{12} 7^1$ & \cite[Theorem~3.18]{GeLing.2004.gum1}  \\ \hline
    $28$ & $4^7$     & \cite[Lemma~6.13]{BSH}              &      & $4^5 7^5$    & Table~\ref{4575}  \\ \hline
         & $7^4$     & \cite[Lemma~3.5]{Hanani.1975}       & $64$ & $4^{16}$     & \cite[Lemma~6.13]{BSH}  \\ \hline
    $31$ & $4^6 7^1$ & \cite[Lemma~3.17]{GeLing.2004.gum1} &      & $4^9 7^4$    & Table~\ref{4974}  \\ \hline
    $40$ & $4^{10}$  & \cite[Lemma~6.13]{BSH}              &      & $4^2 7^8$    & Table~\ref{4278}  \\ \hline
         & $4^3 7^4$ & \cite[Table~8]{ABBCF.1mod3}         & $67$ & $4^{15} 7^1$ & \cite[Theorem~3.18]{GeLing.2004.gum1}  \\ \hline
    $43$ & $4^9 7^1$ & \cite[Lemma~3.17]{GeLing.2004.gum1} &      & $4^8 7^5$    & Table~\ref{4875}  \\ \hline
         & $4^2 7^5$ & \cite[Table~15]{ABBCF.1mod3}        &      & $4^1 7^9$    & \cite[Theorem~2.11]{gezhu}  \\ \hline
    $52$ & $4^{13}$  & \cite[Lemma~6.13]{BSH}              & $76$ & $4^{19}$     & \cite[Lemma~6.13]{BSH} \\ \hline
         & $4^6 7^4$ & Table~\ref{4674}                    &      & $4^{12} 7^4$ & Unknown \\ \hline
         &           &                                     &      & $4^5 7^8$    & Unknown \\ \hline
  \end{tabular}
\end{center}}
\end{table}

\begin{table}[ht!]
\caption{Constructions for $4$-GDDs of type $4^t 7^s$ with $79 \leq v \leq 151$ points in Lemma~\ref{lemma.4t7s.vleq151}.}
\label{table.4t7s.v79-151}
{\noindent
\begin{center}
  \begin{tabular}{|c|l|l|c|l|l|} \hline
    $v$ & Types & Input $4$-GDD & $u$ & Fill-in $4$-GDD & Reference \\ \hline

     $79$ & $4^{18} 7^1$ &  &  &  & Lemma~\ref{lemma.4pn1} \\ \hline
          & $4^{11} 7^5$ &  &  &  & Unknown \\ \hline
          & $4^4 7^9$ & $7^9 16^1$ & $0$ & $4^4$ & Lemma~\ref{lemma.7pn1} \\ \hline
          
     $88$ & $4^{22}$, $4^{15} 7^4$ & $4^{15} 28^1$ & $0$ & $4^7, 7^4$ & Lemma~\ref{lemma.4pn1} \\ \hline
          & $4^8 7^8$ &  &  &  & Table~\ref{4878} \\ \hline
          & $4^1 7^{12}$ &  &  &  & Lemma~\ref{lemma.7pn1} \\ \hline

     $91$ & $4^{21} 7^1$ &  &  &  & Lemma~\ref{lemma.4pn1} \\ \hline
          & $4^{14} 7^5$ &  &  &  & Table~\ref{41475} \\ \hline
          & $4^7 7^9$, $7^{13}$ & $7^9 28^1$ & $0$ & $4^7, 7^4$ & Lemma~\ref{lemma.7pn1} \\ \hline
          
    $100$ & $4^{25}$, $4^{18} 7^4$ & $4^{18} 28^1$ & $0$ & $4^7, 7^4$ & Lemma~\ref{lemma.4pn1} \\ \hline
          & $4^{11} 7^8$ &  &  &  & Unknown \\ \hline
          & $4^4 7^{12}$ & $7^{12} 16^1$ & $0$ & $4^4$ & Lemma~\ref{lemma.7pn1} \\ \hline

    $103$ & $4^{24} 7^1$ &  &  &  & Lemma~\ref{lemma.4pn1} \\ \hline
          & $4^{17} 7^5$, $4^{10} 7^9$, $4^3 7^{13}$ &  &  &  & Unknown \\ \hline
          
    $112$ & $4^{28}$, $4^{21} 7^4$, $4^{14} 7^8$, $4^7 7^{12}$, $7^{16}$ & $28^4$ & $0$ & $4^7, 7^4$ & Lemma~\ref{lemma.gp} \\ \hline

    $115$ & $4^{27} 7^1$ &  &  &  & Lemma~\ref{lemma.4pn1} \\ \hline
          & $4^{20} 7^5$ &  &  &  & Unknown \\ \hline
          & $4^{13} 7^9$, $4^6 7^{13}$ & $21^4 24^1$ & $7$ & $4^7$, $7^4$, $4^6 7^1$ & Lemma~\ref{lemma.gpn1.g3mod6} \\ \hline
          
    $124$ & $4^{31}$, $4^{24} 7^4$, $4^{17} 7^8$, $4^{10} 7^{12}$ & $4^{10} 28^3$ & $0$ & $4^7$, $7^4$ & Lemma~\ref{lemma.4gdd.410283.and.413283} \\ \hline
          & $4^3 7^{16}$ &  &  &  & Unknown \\ \hline

    $127$ & $4^{30} 7^1$, $4^{23} 7^5$ & $24^4 27^1$ & $4$ & $4^7$, $7^4$, $4^6 7^1$ & Lemma~\ref{lemma.gpn1.0mod6} \\ \hline
          & $4^{16} 7^9$ &  &  &  & Lemma~\ref{lemma.4gdd.4^16_7^9} \\ \hline
          & $4^9 7^{13}$, $4^2 7^{17}$ &  &  &  & Unknown \\ \hline

    $136$ & $4^{34}$, $4^{27} 7^4$, $4^{20} 7^8$, $4^{13} 7^{12}$ & $4^{13} 28^3$ & $0$ & $4^7$, $7^4$ & Lemma~\ref{lemma.4gdd.410283.and.413283} \\ \hline
          & $4^6 7^{16}$ & $7^{15} 31^1$ & $0$ & $4^6 7^1$ & Lemma~\ref{lemma.7pn1} \\ \hline
          
    $139$ & $4^{33} 7^1$, $4^{26} 7^5$ & $24^4 36^1$ & 7 & $4^6 7^1$, $4^9 7^1$, $4^2 7^5$ & Lemma~\ref{lemma.gpn1.0mod6} \\ \hline
          & $4^{19} 7^9$, $4^{12} 7^{13}$, $4^5 7^{17}$ &  &  &  & Unknown \\ \hline

    $148$ & $4^{37}$, $4^{30} 7^4$, $4^{23} 7^8$, $4^{16} 7^{12}$, $4^9 7^{16}$ & $36^4$ & $4$ & $4^{10}, 4^3 7^4$ & Lemma~\ref{lemma.gp} \\ \hline
          & $4^2 7^{20}$ & $7^{15} 43^1$ & $0$ & $4^2 7^5$ & Lemma~\ref{lemma.7pn1} \\ \hline
          
    $151$ & $4^{36} 7^1$, $4^{29} 7^5$, $4^{22} 7^9$, $4^{15} 7^{13}$, $4^8 7^{17}$ & $36^4$ & $7$ & $4^9 7^1, 4^2 7^5$ & Lemma~\ref{lemma.gp} \\ \hline
          & $4^1 7^{21}$ &  &  &  & Lemma~\ref{lemma.7pn1} \\ \hline

  \end{tabular}
\end{center}}
\end{table}

We use the $4$-GDDs in Lemma~\ref{lemma.4t7s.vleq151} to construct infinite families of the remaining $4$-GDDs of type $4^t 7^s$.
Note that $v \equiv 4$ or $7 \pmod{12}$ implies that $v \equiv 4, 7, 16, 19, \ldots, 76$ or $79 \pmod{84}$.
We first consider the cases when $v \not\equiv 19, 76$ or $79 \pmod{84}$ in Lemma~\ref{lemma.4t7s.inf.family.main}.
The cases when $v \equiv 19, 76$ or $79 \pmod{84}$ are then handled in Lemmas~\ref{lemma.4t7s.adhoc.rec.v19mod84} to \ref{lemma.4t7s.inf.family.v79mod84}.

\begin{lemma} 
\label{lemma.4t7s.inf.family.main}
Suppose that $v = 4t+7s$ where $v \equiv 4$ or $7 \pmod{12}$.
Suppose also that $v \geq 172$ where $v \not\equiv 19, 76$ or $79 \pmod{84}$.
Then there exists a $4$-GDD of type~$4^t 7^s$.
\end{lemma}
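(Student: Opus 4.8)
The plan is to realize every feasible type by filling the groups of a single, uniformly chosen master $4$-GDD whose groups all have size $28$, together with one small leftover group. First I would translate the hypotheses into residues modulo $84$: since $v\equiv 4$ or $7\pmod{12}$ and $v\not\equiv 19,76,79\pmod{84}$, the residue $r=v\bmod 84$ must lie in $R=\{4,7,16,28,31,40,43,52,55,64,67\}$, and $v\ge 172$ forces $v=84k+r$ with $k\ge 2$ (because $84\cdot 1+r\le 151$). I would also record, via Lemma~\ref{lemma.4gdd.4t7s.v4or7mod12}, that $v\equiv r\pmod{12}$ fixes the residue of $s$ modulo $4$; hence the feasible types for a given $v$ are exactly the pairs with $t=(v-7s)/4$ and $s$ running through $\{s_{\min},s_{\min}+4,\dots,s_{\max}\}$, where $s_{\min}\in\{0,1\}$ and $s_{\max}$ is the largest admissible $s$ with $7s\le v$.

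For each residue $r\in R\setminus\{28\}$ I would take the master $4$-GDD of type $28^p r^1$ with $p=(v-r)/28=3k\ge 6$; this exists by Lemma~\ref{lemma.4gdd.28p_n1}, since $r\equiv 1\pmod 3$ and $r\le 67\le 70\le 14(p-1)$. Applying Construction~\ref{construction.fillin} with $u=0$, I fill each group of size $28$ by either $4^7$ or $7^4$ (both exist by Lemma~\ref{lemma.gp}) and subdivide the leftover group of size $r$ by any feasible small type $4^{t_r}7^{s_r}$ on $r$ points (for $r\in\{4,7\}$ the leftover is simply kept as a single group). If exactly $j$ of the size-$28$ groups are filled by $7^4$, the result has $s=4j+s_r$ and $t=(v-7s)/4$. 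For the remaining residue $r=28$ the leftover disappears and I would instead use the master $28^{p'}$ with $p'=v/28\equiv 1\pmod 3$ and $p'\ge 7$, which exists by Lemma~\ref{lemma.gp}, again filling its groups by $4^7$ or $7^4$.

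The decisive step is to verify that these fills realize \emph{every} feasible type, i.e.\ that the attainable values $s=4j+s_r$ (with $0\le j\le p$ and $s_r$ ranging over the feasible small types of $r$) sweep out the whole set $\{s_{\min},\dots,s_{\max}\}$. Because $r\le 67<76$, every feasible small type of $r$ exists by Lemma~\ref{lemma.4t7s.vleq151} (the first unknown type occurs only at $v=76$), so the values $s_r$ form a contiguous arithmetic progression of common difference $4$ from $s_{\min}$ up to some $s^{r}_{\max}$; adding the contiguous block $\{0,4,\dots,4p\}$ coming from the choice of $j$ yields exactly $\{s_{\min},s_{\min}+4,\dots,s^{r}_{\max}+4p\}$. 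A short computation from $v=28p+r$ shows that the largest admissible $s$ equals $4p+s^{r}_{\max}=s_{\max}$ while $s_{\min}$ is unchanged, so the attainable set coincides with the feasible set; the case $r=28$ is identical with $s^{r}_{\max}=0$.

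I expect the main obstacle to be precisely this range-matching argument, namely confirming that no feasible type is missed at either extreme of $s$ (the all-$4$ and all-$7$ ends) and that the contiguity of the small-type progression is never broken by a nonexistent building block. This is exactly where the three deferred residues fail: for $r=19$ the only candidate small type $4^3 7^1$ violates condition~(4) of Theorem~\ref{necessary}, so there is no valid leftover at all, whereas for $r\in\{76,79\}$ the leftover size exceeds $14(p-1)=70$ at the minimal admissible $p=6$ (and certain required small types are still unknown), so the uniform master $28^p r^1$ is unavailable. These residues therefore genuinely require the separate constructions promised in the subsequent lemmas.
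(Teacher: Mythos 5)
Your proposal is correct and follows essentially the same route as the paper: write $v = 28p + r$, take a master $4$-GDD of type $28^p\, r^1$ from Lemma~\ref{lemma.4gdd.28p_n1} (or $28^{p'}$ when $r = 28$), apply Construction~\ref{construction.fillin} with $u=0$, and fill the groups with $4^7$, $7^4$ and the small $4^{t_r} 7^{s_r}$ designs guaranteed by Lemma~\ref{lemma.4t7s.vleq151}. The differences are only organizational: the paper disposes of $t=0$ and $s\in\{0,1\}$ first via Lemmas~\ref{lemma.gp} and~\ref{lemma.4pn1} and then specifies one explicit leftover fill per residue (with a two-case split at $\ell\in\{64,67\}$), whereas you absorb everything into a uniform sumset/sweep argument over the attainable values of $s$ — the decomposition, key lemma and building blocks are identical.
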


\begin{proof}
If $t = 0$ or $s \in \{0,1\}$, 
then existence is given by Lemmas~\ref{lemma.4gdd.g^p} and \ref{lemma.4pn1}, 
so we may assume that $t > 0$ and $s \geq 2$.

Suppose that $\ell \equiv 4t+7s \pmod{84}$ where $0 \leq \ell < 84$ and $\ell \neq 19, 76$ or $79$.
Recall that $v \not\equiv 19, 76$ or $79 \pmod{84}$, so it follows that $0 \leq \ell \leq 67$.
Then set $m = (4t+7s-\ell)/28$.
Now, note that $m \equiv 0 \pmod{3}$, so $v \geq 172$ implies that $m \geq 6$.
This means that $\ell \leq 67 \leq 14(m-1)$.
Hence, by Lemma~\ref{lemma.4gdd.28p_n1}, there exists a $4$-GDD of type $28^m \ell^1$.

Start with a 4-GDD of type~$28^m \ell^1$.
Then, apply Construction~\ref{construction.fillin} with $u=0$.  

If $\ell = 4$, 
then set $x = s/4$.

If $\ell = 7$, 
then set $x = (s-1)/4$.

If $\ell = 16$, 
then fill in the group of size $\ell$ with a $4$-GDD of type $4^4$
and set $x = s/4$.

If $\ell = 31$, 
then fill in the group of size $\ell$ with a $4$-GDD of type $4^6 7^1$ and set $x = (s-1)/4$.

If $\ell \in \{28, 40, 52\}$, then there are at least four groups of size $7$.
Fill in the group of size $\ell$ with a $4$-GDD of type $7^4$, $4^3 7^4$ or $4^6 7^4$ for $\ell=28, 40, 52$, respectively,
and set $x = (s-4)/4$.

If $\ell \in \{43, 55\}$, then there are at least five groups of size $7$.
Fill in the group of size $\ell$ with a $4$-GDD of type $4^2 7^5$ or $4^5 7^5$ for $\ell=43, 55$, respectively,
and set $x = (s-5)/4$.

If $\ell = 64$, then there are at least four groups of size $7$.
If $s = 4$, 
then fill in the group of size $\ell$ with a $4$-GDD of type $4^9 7^4$ and set $x = 0$. 
If $s \geq 8$, 
then fill in the group of size $\ell$ with a $4$-GDD of type $4^2 7^8$
and set $x = (s-8)/4$.

If $\ell = 67$, then there are at least five groups of size $7$.
If $s = 5$, 
then fill in the group of size $\ell$ with a $4$-GDD of type $4^8 7^5$ 
and set $x = 0$. 
If $s \geq 9$, 
then fill in the group of size $\ell$ with a $4$-GDD of type $4^1 7^9$
and set $x = (s-9)/4$.

Finally, fill in $x$ groups of size $28$ with a $4$-GDD of type $7^4$ and the remaining $m-x$ groups of size $28$, if any, with a 4-GDD of type $4^7$.
\end{proof}


\begin{lemma} \label{lemma.4t7s.adhoc.rec.v19mod84}
Suppose that $v = 4t+7s$ where $v \equiv 19 \pmod{84}$ and $187 \leq v \leq 523$.
Then there exists a $4$-GDD of type~$4^t 7^s$.

\begin{proof}
Constructions for these $4$-GDDs are given in Table~\ref{table.4t7s.adhoc.rec.v19mod84}. 
For each pair $(t,s)$, we start with a given input $4$-GDD and then apply Construction~\ref{construction.fillin} using the given value of~$u$. 
We also give the required fill-in designs for each construction. 
Some input $4$-GDDs have a type of the form~$g^p n^1$ with $g \in \{7, 36, 39\}$ in which case they exist by Lemma~\ref{gpn1.0mod6}, \ref{lemma.gpn1.g3mod6} or \ref{lemma.7pn1}.
All other input $4$-GDDs have a type of the form~$g^p$ and exist by Lemma~\ref{lemma.4gdd.g^p}.
\end{proof}
\end{lemma}

\begin{table}[hbt!]\small
\caption{Constructions for $4$-GDDs of type~$4^t 7^s$ with $187 \leq 4t+7s \leq 523$ in Lemma~\ref{lemma.4t7s.adhoc.rec.v19mod84}.}
\label{table.4t7s.adhoc.rec.v19mod84}
{\noindent
\begin{center}
  \begin{tabular}{|c|l|l|c|l|} \hline
    $v$ & Types & Input $4$-GDD & $u$ & Fill-in 4-GDDs \\ \hline
        
    187 & $4^{45} 7^1, 4^{38} 7^5, 4^{31} 7^9, 4^{24} 7^{13}, 4^{17} 7^{17}$ & $36^5$ & $7$ & $4^9 7^1, 4^2 7^5$  \\ \hline
        & $4^{10} 7^{21}, 4^3 7^{25}$ & $7^{21} 40^1$ & $0$ & $4^{10}, 4^3 7^4$ \\ \hline
        
    271 & $4^{66} 7^1, 4^{59} 7^5, 4^{52} 7^9, \ldots, 4^{17} 7^{29}$ & $36^6 51^1$ & $4$ & $4^{10}, 4^3 7^4, 4^{12} 7^1, 4^5 7^5$ \\ \hline
        & $4^{10} 7^{33}, 4^3 7^{37}$ & $7^{33} 40^1$ & $0$ & $4^{10}, 4^3 7^4$ \\ \hline
        
    355 & $4^{87} 7^1, 4^{80} 7^5, 4^{73} 7^9, \ldots, 4^{17} 7^{41}$ & $36^8 63^1$ & $4$ & $4^{10}, 4^3 7^4, 4^{15} 7^1, 4^1 7^9$ \\ \hline
        & $4^{10} 7^{45}, 4^3 7^{49}$ & $7^{45} 40^1$ & $0$ & $4^{10}, 4^3 7^4$ \\ \hline

    439 & $4^{108} 7^1, 4^{101} 7^5, 4^{94} 7^9, \ldots, 4^{24} 7^{49}$ & $36^{10} 75^1$ & $4$ & $4^{10}, 4^3 7^4, 4^{18} 7^1, 4^4 7^9$ \\ \hline
        & $4^{17} 7^{53}$ & $39^9 84^1$ & $4$ & $4^2 7^5, 4^8 7^8$ \\ \hline
        & $4^{10} 7^{57}, 4^3 7^{61}$ & $7^{57} 40^1$ & $0$ & $4^{10}, 4^3 7^4$ \\ \hline

    523 & $4^{129} 7^1, 4^{122} 7^5, 4^{115} 7^9, \ldots, 4^{24} 7^{61}$ & $36^{12} 87^1$ & $4$ & $4^{10}, 4^3 7^4, 4^{21} 7^1, 7^{13}$ \\ \hline
        & $4^{17} 7^{65}$ & $39^{12} 51^1$ & $4$ & $4^2 7^5, 4^5 7^5$ \\ \hline
        & $4^{10} 7^{69}, 4^3 7^{73}$ & $7^{69} 40^1$ & $0$ & $4^{10}, 4^3 7^4$ \\ \hline
  \end{tabular}
\end{center}}
\end{table}

\begin{lemma} 
\label{lemma.4t7s.inf.family.v19mod84}
Suppose that $v = 4t+7s$ where $v \equiv 19 \pmod{84}$ and $v \geq 607$.
Then there exists a $4$-GDD of type~$4^t 7^s$.

\begin{proof}
If $t = 1$ or $s = 1$, 
then existence is given by Lemmas~\ref{lemma.4pn1} and \ref{lemma.7pn1}.
Since $v \equiv 19 \pmod{84}$, it follows that $v \equiv 7 \pmod{12}$.
Thus, by Lemma~\ref{lemma.4gdd.4t7s.v4or7mod12}, we have $s \equiv 1 \pmod{4}$
so we may assume that $t > 1$ and $s \geq 5$.

Set $m = (4t+7s-187)/28$.
Note that $v \geq 607$ implies that $m \geq 15$.
This means that $187 \leq 14(m-1)$.
Hence, by Lemma~\ref{lemma.4gdd.28p_n1}, there exists a $4$-GDD of type~$28^m 187^1$.

Start with a $4$-GDD of type~$28^m 187^1$.
Then, apply Construction~\ref{construction.fillin} with $u=0$.

If $s \leq 21$, then fill in the group of size $187$ with a $4$-GDD of type~$4^{(187-7s)/4} 7^s$ which exists by Lemma~\ref{lemma.4t7s.adhoc.rec.v19mod84}. 
Then, fill in all $m$ groups of size $28$ with a $4$-GDD of type~$4^7$.

If $s \geq 25$, then fill in the group of size $187$ with a $4$-GDD of type~$4^3 7^{25}$ which exists by Lemma~\ref{lemma.4t7s.adhoc.rec.v19mod84}. 
Then, fill in $(s-25)/4$ groups of size~$28$ with a $4$-GDD of type~$7^4$ 
and fill in the remaining $m-(s-25)/4 = (t-3)/7$ groups of size~$28$ with a $4$-GDD of type~$4^7$.
\end{proof}
\end{lemma}

\begin{lemma} 
\label{lemma.4t7s.adhoc.rec.v76mod84}
Suppose that $v = 4t+7s$ where $v \equiv 76 \pmod{84}$ and $160 \leq v \leq 496$.
Then there exists a $4$-GDD of type~$4^t 7^s$.

\begin{proof}
Constructions for these $4$-GDDs are given in Table~\ref{table.4t7s.adhoc.rec.v76mod84}. 
For each pair $(t,s)$, we start with a given input $4$-GDD and then apply Construction~\ref{construction.fillin} using the given value of~$u$. 
We also give the required fill-in designs for each construction.
Some input $4$-GDDs have a type of the form~$g^p n^1$ with $g \in \{7, 36\}$ in which case they exist by Lemma~\ref{gpn1.0mod6} or \ref{lemma.7pn1}.
All other input $4$-GDDs have a type of the form~$g^p$ and exist by Lemma~\ref{lemma.4gdd.g^p}.
\end{proof}
\end{lemma}

\begin{table}[hbt!]\small
\caption{Constructions for $4$-GDDs of type~$4^t 7^s$ with $160 \leq v \leq 496$ in Lemma~\ref{lemma.4t7s.adhoc.rec.v76mod84}.}
\label{table.4t7s.adhoc.rec.v76mod84}
{\noindent
\begin{center}
  \begin{tabular}{|c|l|l|c|l|} \hline
    $v$ & Types & Input $4$-GDD type & $u$ & Fill-in 4-GDD types \\ \hline
        
    160 & $4^{40}, 4^{33} 7^4, 4^{26} 7^8, 4^{19} 7^{12}, 4^{12} 7^{16}$ & $40^4$ & $0$ & $4^{10}, 4^3 7^4$  \\ \hline
        & $4^5 7^{20}$ & $39^4$ & $4$ & $4^2 7^5$ \\ \hline
        
    244 & $4^{61}, 4^{54} 7^4, 4^{47} 7^8, \ldots, 4^{19} 7^{24}$ & $36^6 24^1$ & $4$ & $4^{10}, 4^3 7^4, 4^7$  \\ \hline
        & $4^{12} 7^{28}, 4^5 7^{32}$ & $7^{27} 55^1$ & $0$ & $4^{12} 7^1, 4^5 7^5$ \\ \hline
        
    328 & $4^{82}, 4^{75} 7^4, 4^{68} 7^8, \ldots, 4^{19} 7^{36}$ & $36^9$ & $4$ & $4^{10}, 4^3 7^4$  \\ \hline
        & $4^{12} 7^{40}, 4^5 7^{44}$ & $7^{39} 55^1$ & $0$ & $4^{12} 7^1, 4^5 7^5$ \\ \hline

    412 & $4^{103}, 4^{96} 7^4, 4^{89} 7^8, \ldots, 4^{19} 7^{48}$ & $36^9 84^1$ & $4$ & $4^{10}, 4^3 7^4, 4^{22}, 4^1 7^{12}$  \\ \hline
        & $4^{12} 7^{52}, 4^5 7^{56}$ & $7^{51} 55^1$ & $0$ & $4^{12} 7^1, 4^5 7^5$ \\ \hline

    496 & $4^{124}, 4^{117} 7^4, 4^{110} 7^8, \ldots, 4^{26} 7^{56}$ & $36^{13} 24^1$ & $4$ & $4^{10}, 4^3 7^4, 4^7, 7^4$  \\ \hline
        & $4^{19} 7^{60}$ & $7^{60} 76^1$ & $0$ & $4^{19}$ \\ \hline
        & $4^{12} 7^{64}, 4^5 7^{68}$ & $7^{63} 55^1$ & $0$ & $4^{12} 7^1, 4^5 7^5$ \\ \hline
  \end{tabular}
\end{center}}
\end{table}

\begin{lemma} 
\label{lemma.4t7s.inf.family.v76mod84}
Suppose that $v = 4t+7s$ where $v \equiv 76 \pmod{84}$ and $v \geq 580$.
Then there exists a $4$-GDD of type~$4^t 7^s$.

\begin{proof}
If $t = 0$ or $s = 0$, 
then existence is given by Lemma~\ref{lemma.4gdd.g^p}.
Since $v \equiv 76 \pmod{84}$, it follows that $v \equiv 4 \pmod{12}$.
Thus, by Lemma~\ref{lemma.4gdd.4t7s.v4or7mod12}, we have $s \equiv 0 \pmod{4}$
so we may assume that $t > 0$ and $s \geq 4$.

Set $m = (4t+7s-160)/28$.
Note that $v \geq 580$ implies that $m \geq 15$.
This means that $160 \leq 14(m-1)$.
Hence, by Lemma~\ref{lemma.4gdd.28p_n1}, there exists a $4$-GDD of type $28^m 160^1$.

Start with a $4$-GDD of type~$28^m 160^1$.
Then, apply Construction~\ref{construction.fillin} with $u=0$.

If $s \leq 16$, then fill in the group of size~$160$ with a $4$-GDD of type~$4^{(160-7s)/4} 7^s$ which exists by Lemma~\ref{lemma.4t7s.adhoc.rec.v76mod84}. 
Then, fill in all $m$~groups of size~$28$ with a $4$-GDD of type~$4^7$.

If $s \geq 20$, then fill in the group of size~$160$ with a $4$-GDD of type~$4^5 7^{20}$ which exists by Lemma~\ref{lemma.4t7s.adhoc.rec.v76mod84}. 
Then, fill in $(s-20)/4$ groups of size~$28$ with a $4$-GDD of type~$7^4$ 
and fill in the remaining $m-(s-20)/4 = (t-5)/7$ groups of size~$28$ with a $4$-GDD of type~$4^7$.
\end{proof}
\end{lemma}

\begin{lemma} 
\label{lemma.4t7s.adhoc.rec.v79mod84}
Suppose that $v = 4t+7s$ where $v \equiv 79 \pmod{84}$ and $v \in \{163,247\}$.
Then there exists a $4$-GDD of type~$4^t 7^s$.

\begin{proof}
Constructions for these $4$-GDDs are given in Table~\ref{table.4t7s.adhoc.rec.v79mod84}. 
For each pair $(t,s)$, we start with a given input $4$-GDD and then apply Construction~\ref{construction.fillin} using the given value of~$u$. 
We also give the required fill-in designs for each construction. 
All input $4$-GDDs have a type of the form~$g^p n^1$ with $g \in \{7, 12, 36, 39\}$ in which case they exist by Lemma~\ref{gpn1.0mod6}, \ref{lemma.gpn1.g3mod6} or~\ref{lemma.7pn1}.
\end{proof}
\end{lemma}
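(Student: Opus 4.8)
The plan is to prove this by exhibiting, for each feasible pair $(t,s)$ with $4t+7s\in\{163,247\}$, an explicit $4$-GDD of type $4^t 7^s$, and to record these constructions in a single table (Table~\ref{table.4t7s.adhoc.rec.v79mod84}) in exactly the style of Tables~\ref{table.4t7s.adhoc.rec.v19mod84} and \ref{table.4t7s.adhoc.rec.v76mod84}. First I would enumerate the feasible types: since $v\equiv 79\equiv 7\pmod{12}$, Lemma~\ref{lemma.4gdd.4t7s.v4or7mod12} forces $s\equiv 1\pmod 4$, so the candidates are the pairs with $s\in\{1,5,9,\dots\}$ and $t=(v-7s)/4\ge 0$ --- six types for $v=163$ and nine for $v=247$. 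Each of these will be obtained from a suitable input GDD by a single application of the fill-in Construction~\ref{construction.fillin}.

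The engine of the proof is to choose an input GDD of type $g^p n^1$ with $g\in\{7,12,36,39\}$, whose existence is guaranteed by Lemmas~\ref{gpn1.0mod6}, \ref{lemma.gpn1.g3mod6} and \ref{lemma.7pn1}, and then to replace its large groups by small $4$-GDDs of type $4^a 7^b$ drawn from Lemma~\ref{lemma.4t7s.vleq151} together with the $g^p$ and $g^p n^1$ results. The organising idea is dictated by the congruence $g+u\equiv 4$ or $7\pmod{12}$ needed to fill a group of size $g$: for the $s=1$ types one may take $g=12$ with $u=4$ (each group of $12$ becoming $16$, filled by $4^4$ and so contributing $4^3$), which is equivalent to invoking Lemma~\ref{lemma.4pn1} directly; for the intermediate values of $s$ one takes $g\in\{36,39\}$ with $u=4$, so that groups of size $36$ or $39$ become $40$ or $43$ and are filled by the designs $4^{10},4^3 7^4$ or $4^9 7^1,4^2 7^5$, the mixture of fill-ins being chosen to realise each target value of $s$; and for the largest values of $s$ one takes $g=7$ with $u=0$, leaving the groups of size $7$ intact and filling only the group of size $n$ by a small $4^a 7^b$ design. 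Varying which fill-in is applied to each large group lets a single input design cover an entire arithmetic progression of $(t,s)$ at fixed $v$.

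The main obstacle, and the part requiring genuine care rather than routine bookkeeping, is the simultaneous satisfaction of two kinds of constraints. On the input side, the parameters of each $g^p n^1$ must meet the exact hypotheses of its existence lemma: the residue of $p$ modulo $12$ governs the admissible $n$ for $g=7$, the residue of $p$ modulo $4$ does so for $g=39$, and $n\equiv 0\pmod 3$ within the stated range is needed for $g\in\{12,36\}$. On the fill-in side, every $4^a 7^b$ used must actually exist, and this is the real hazard, since Lemma~\ref{lemma.4t7s.vleq151} leaves several small types (such as $4^{11}7^5$ on $79$ points) undetermined, so the choices of $g$, $p$, $n$ and $u$ must be steered to avoid these unknown designs entirely. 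I expect the $7$-heavy types to be the delicate cases --- for instance $4^{11}7^{29}$ at $v=247$, where the naive input $7^{29}44^1$ fails because $29\equiv 5\pmod{12}$ lies outside Lemma~\ref{lemma.7pn1}, and which I would instead build from an input of type $39^5 48^1$ with $u=4$ and fill-ins $4^2 7^5$ and $4^6 7^4$. Once a working assignment of input and fill-ins is found for each pair, the correctness of each line follows immediately from Construction~\ref{construction.fillin}, so the remaining work is purely the verification that the tabulated entries satisfy all the cited lemmas.
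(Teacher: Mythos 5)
Your proposal is correct and follows essentially the same route as the paper: both enumerate the feasible pairs $(t,s)$ via Lemma~\ref{lemma.4gdd.4t7s.v4or7mod12} and then apply Construction~\ref{construction.fillin} to input $4$-GDDs of type $g^p n^1$ with $g \in \{7,12,36,39\}$ (existing by Lemmas~\ref{gpn1.0mod6}, \ref{lemma.gpn1.g3mod6} and \ref{lemma.7pn1}), choosing mixtures of fill-in designs such as $4^{10}$, $4^3 7^4$, $4^9 7^1$, $4^2 7^5$ to realise every admissible value of $s$. Your individual table entries differ in places (e.g.\ you build $4^{11} 7^{29}$ from $39^5 48^1$ with $u=4$ and fill-ins $4^2 7^5$ and $4^6 7^4$, where the paper uses $36^5 63^1$ with fill-ins $4^3 7^4$ and $4^1 7^9$), but both choices satisfy the cited lemmas, so the difference is only in bookkeeping, not in method.
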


\begin{table}[hbt!]\small
\caption{Constructions for $4$-GDDs of type~$4^t 7^s$ with $v \in\{163,247\}$ in Lemma~\ref{lemma.4t7s.adhoc.rec.v79mod84}.}
\label{table.4t7s.adhoc.rec.v79mod84}
{\noindent
\begin{center}
  \begin{tabular}{|c|l|l|c|l|} \hline
    $v$ & Types & Input $4$-GDD type & $u$ & Fill-in $4$-GDD types \\ \hline
        
    163 & $4^{39} 7^1$ & $12^{13} 3^1$ & $4$ & $4^4$  \\ \hline
        & $4^{32} 7^5, 4^{25} 7^9, 4^{18} 7^{13}, 4^{11} 7^{17}, 4^4 7^{21}$ & $39^4 3^1$ & $4$ & $4^2 7^5, 4^9 7^1$ \\ \hline

    247 & $4^{60} 7^1, 4^{53} 7^5, 4^{46} 7^9, \ldots, 4^{11} 7^{29}$ & $36^5 63^1$ & $4$ & $4^{10}, 4^3 7^4, 4^{15} 7^1, 4^1 7^9$  \\ \hline
        & $4^4 7^{33}$ & $7^{33} 16^1$ & $0$ & $4^4$ \\ \hline
  \end{tabular}
\end{center}}
\end{table}

\begin{lemma} 
\label{lemma.4t7s.inf.family.v79mod84}
Suppose that $v = 4t+7s$ where $v \equiv 79 \pmod{84}$ and $v \geq 331$.
Then there exists a $4$-GDD of type~$4^t 7^s$.

\begin{proof}
If $t = 0$ or $s = 1$, 
then existence is given by Lemma~\ref{lemma.4gdd.g^p} or~\ref{lemma.4pn1}.
Since $v \equiv 79 \pmod{84}$, it follows that $v \equiv 7 \pmod{12}$.
Thus, by Lemma~\ref{lemma.4gdd.4t7s.v4or7mod12}, we have $s \equiv 1 \pmod{4}$
so we may assume that $t > 0$ and $s \geq 5$.

Set $m = (4t+7s-79)/28$.
Note that $v \geq 331$ implies that $m \geq 9$.
This means that $79 \leq 14(m-1)$.
Hence, by Lemma~\ref{lemma.4gdd.28p_n1}, there exists a $4$-GDD of type~$28^m 79^1$.

Start with a $4$-GDD of type~$28^m 79^1$. 
Then, apply Construction~\ref{construction.fillin} with $u=0$.

If $s = 5$, then fill in the group of size~$79$ with a $4$-GDD of type~$4^{18} 7^1$ which exists by Lemma~\ref{lemma.4pn1}. 
Then, fill in one group of size~$28$ with a $4$-GDD of type~$7^4$ and the remaining $m-1$ groups of size~$28$ with a $4$-GDD of type~$4^7$.

If $s \geq 9$, then fill in the group of size~$79$ with a $4$-GDD of type~$4^4 7^9$ which exists by Lemma~\ref{lemma.4t7s.vleq151}. 
Then, fill in $(s-9)/4$ groups of size~$28$ with a $4$-GDD of type~$7^4$ 
and fill in the remaining $m-(s-9)/4 = (t-4)/7$ groups of size~$28$ with a $4$-GDD of type~$4^7$.
\end{proof}
\end{lemma}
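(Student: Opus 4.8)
The plan is to mirror the recursive strategy already used for the $v\equiv 19$ and $v\equiv 76\pmod{84}$ families: build a single ``master'' design consisting of one large group together with many groups of size $28$, then use Construction~\ref{construction.fillin} to fill every group with a small $4$-GDD of type $4^a 7^b$ so as to realise the target $(t,s)$. First I would dispose of the degenerate types. If $t=0$ the type is $7^s$, which exists by Lemma~\ref{lemma.4gdd.g^p}, and if $s=1$ the type is $4^t 7^1$, which exists by Lemma~\ref{lemma.4pn1}. Since $v\equiv 79\pmod{84}$ forces $v\equiv 7\pmod{12}$, Lemma~\ref{lemma.4gdd.4t7s.v4or7mod12} gives $s\equiv 1\pmod 4$; together with $s\neq 1$ this lets me assume $t>0$ and $s\geq 5$.

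Next I would set $m=(4t+7s-79)/28$. Because $v-79$ is a multiple of $84=3\cdot 28$, this $m$ is an integer with $m\equiv 0\pmod 3$, and $v\geq 331$ forces $m\geq 9$, whence $79\leq 14(m-1)$. As $79\equiv 1\pmod 3$ and $m\equiv 0\pmod 3$ with $m\geq 9\geq 4$, Lemma~\ref{lemma.4gdd.28p_n1} supplies a $4$-GDD of type $28^m 79^1$, which I take as the master design and feed into Construction~\ref{construction.fillin} with $u=0$.

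The remaining work is to fill the single group of size $79$ and the $m$ groups of size $28$. The size-$79$ group carries the ``excess'' structure and is handled in two subcases. When $s=5$ I fill it with a $4$-GDD of type $4^{18}7^1$ (from Lemma~\ref{lemma.4pn1}), then fill one group of size $28$ with $7^4$ and the other $m-1$ with $4^7$. When $s\geq 9$ I fill the size-$79$ group with $4^4 7^9$ (from Lemma~\ref{lemma.4t7s.vleq151}, since $79\leq 151$ and $(4,9)$ is not an exception there), fill $(s-9)/4$ of the size-$28$ groups with $7^4$, and fill the remaining groups with $4^7$.

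The main thing to check---and the only real obstacle---is that this bookkeeping yields exactly $t$ groups of size $4$ and $s$ groups of size $7$. That $(s-9)/4$ is a non-negative integer follows from $s\equiv 1\pmod 4$ and $s\geq 9$, and the congruence $t\equiv 4\pmod 7$, read off from $4t\equiv 79\equiv 2\pmod 7$, guarantees that the leftover count of size-$28$ groups equals $(t-4)/7$, a non-negative integer since $s\geq 9$ forces $t$ large. The short identity $m-(s-9)/4=(t-4)/7$, verified directly from $28m=4t+7s-79$, confirms that the two fill-in families partition the $m$ groups of size $28$ exactly; everything else is routine checking of the hypotheses of the cited lemmas.
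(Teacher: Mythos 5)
Your proposal is correct and follows essentially the same route as the paper's own proof: the same master design $28^m 79^1$ obtained from Lemma~\ref{lemma.4gdd.28p_n1}, the same application of Construction~\ref{construction.fillin} with $u=0$, and the same two subcases filling the size-$79$ group with $4^{18}7^1$ (when $s=5$) or $4^4 7^9$ (when $s\geq 9$) and the size-$28$ groups with $7^4$ and $4^7$. The extra bookkeeping you verify (integrality and non-negativity of $(s-9)/4$ and $(t-4)/7$ via $t\equiv 4\pmod 7$) is implicit in the paper and is a welcome addition rather than a deviation.
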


\section{Summary}
\label{s:summary}

\begin{theorem}
\label{theorem.4t7s.summary}
Suppose that $t + s \equiv 1 \pmod{3}$ and $s \equiv 0$ or $1 \pmod{4}$ where either $t \geq 4$ or $s \geq 4$. 
Then there exists a $4$-GDD of type~$4^t 7^s$ except possibly when
\begin{itemize}
    \item $v = \phantom{1}76:$ $4^{12} 7^4$ or $4^5 7^8;$
    \item $v = \phantom{1}79:$ $4^{11} 7^5;$
    \item $v = 100:$ $4^{11} 7^8;$
    \item $v = 103:$ $4^{17} 7^5,$ $4^{10} 7^9$ or $4^3 7^{13};$
    \item $v = 115:$ $4^{20} 7^5;$
    \item $v = 124:$ $4^3 7^{16};$
    \item $v = 127:$ $4^9 7^{13}$ or $4^2 7^{17};$
    \item $v = 139:$ $4^{19} 7^9,$ $4^{12} 7^{13}$ or $4^5 7^{17}.$
\end{itemize}
\begin{proof}
Recall from Lemma~\ref{lemma.4gdd.4t7s.v4or7mod12} that the conditions that $t + s \equiv 1 \pmod{3}$ and $s \equiv 0$ or $1 \pmod{4}$ are equivalent to the condition that $v \equiv 4$ or $7 \pmod{12}$ where $v = 4t+7s$.
If $v \leq 151$ where either $t \geq 4$ or $s \geq 4$, then the $4$-GDDs exists by Lemma~\ref{lemma.4t7s.vleq151} except possibly for the values of~$t$ and~$s$ listed above.
Otherwise, if $v \geq 160$, then for each congruence class of~$v$ modulo~$84$, a reference for the relevant $4$-GDD is given in Table~\ref{table.4t7s.vgeq160.summary}.
\end{proof}
\end{theorem}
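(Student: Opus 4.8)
The plan is to reduce the two divisibility hypotheses to a single congruence on $v = 4t+7s$ and then dispatch all admissible pairs $(t,s)$ according to the size of $v$ and its residue modulo $84$. First I would apply Lemma~\ref{lemma.4gdd.4t7s.v4or7mod12} to replace the hypotheses that $t+s \equiv 1 \pmod 3$ and $s \equiv 0$ or $1 \pmod 4$ by the equivalent single condition $v \equiv 4$ or $7 \pmod{12}$. Together with the remaining hypothesis that $t \geq 4$ or $s \geq 4$, this means the admissible pairs are precisely the feasible types identified in Lemma~\ref{lemma.necessary.conditions.4t7s}, so the task is purely one of construction.

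The small cases come first. For $v \leq 151$, Lemma~\ref{lemma.4t7s.vleq151} already asserts the existence of a $4$-GDD of type $4^t 7^s$ for every admissible pair except for an explicit finite list, and this list is exactly the set of exceptions recorded in the statement. It therefore remains only to treat $v \geq 160$ and to confirm that no further exceptions arise there.

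For $v \geq 160$ I would split according to the residue of $v$ modulo $84$; since $v \equiv 4$ or $7 \pmod{12}$, the only possible residues are the fourteen values that are $\equiv 4$ or $7 \pmod{12}$. The generic residues, those with $v \not\equiv 19, 76, 79 \pmod{84}$, are handled uniformly by Lemma~\ref{lemma.4t7s.inf.family.main} for all $v \geq 172$; one checks that no admissible $v$ in the range $160 \leq v < 172$ lies in a generic class (the only admissible values there are $160 \equiv 76$ and $163 \equiv 79 \pmod{84}$), so the threshold $172$ leaves no gap. The three exceptional residues are each covered by a pair of lemmas, an ad-hoc lemma for a finite initial range of $v$ and an infinite-family lemma for the tail: namely Lemmas~\ref{lemma.4t7s.adhoc.rec.v19mod84} and~\ref{lemma.4t7s.inf.family.v19mod84} for $v \equiv 19$, Lemmas~\ref{lemma.4t7s.adhoc.rec.v76mod84} and~\ref{lemma.4t7s.inf.family.v76mod84} for $v \equiv 76$, and Lemmas~\ref{lemma.4t7s.adhoc.rec.v79mod84} and~\ref{lemma.4t7s.inf.family.v79mod84} for $v \equiv 79$.

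The main thing to verify, and the only real obstacle in the assembly, is that within each exceptional residue class the ad-hoc range and the infinite-family range interlock with no admissible value of $v$ left uncovered. For $v \equiv 19 \pmod{84}$ the ad-hoc lemma covers $187 \leq v \leq 523$ while the infinite family begins at $v \geq 607$; since the admissible values in this class step by $84$ as $187, 271, 355, 439, 523, 607, \ldots$, the two ranges abut exactly. I would perform the analogous check for $v \equiv 76$ (ad-hoc up to $496$, infinite family from $580$) and for $v \equiv 79$ (ad-hoc on $\{163,247\}$, infinite family from $331$). Collecting all cases yields existence for every admissible pair with $v \geq 160$, so the only exceptions are those inherited from Lemma~\ref{lemma.4t7s.vleq151}, which is exactly the list in the statement.
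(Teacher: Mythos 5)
Your proposal is correct and follows essentially the same route as the paper's proof: reduce the hypotheses to $v \equiv 4$ or $7 \pmod{12}$ via Lemma~\ref{lemma.4gdd.4t7s.v4or7mod12}, invoke Lemma~\ref{lemma.4t7s.vleq151} for $v \leq 151$, and for $v \geq 160$ split by residue modulo $84$ using Lemma~\ref{lemma.4t7s.inf.family.main} for the generic classes and the paired ad-hoc/infinite-family lemmas for $v \equiv 19, 76, 79 \pmod{84}$, exactly as summarised in Table~\ref{table.4t7s.vgeq160.summary}. Your explicit verification that the ranges interlock (e.g.\ that $160$ and $163$ fall in the exceptional classes, and that $187,271,\ldots,523,607$ leave no gap) is a check the paper leaves implicit in its table, and it is carried out correctly.
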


\begin{table}[hbt!]\small
\caption{Constructions for $4$-GDDs of type~$4^t 7^s$ with $v \geq 160$ points in Theorem~\ref{theorem.4t7s.summary}.}
\label{table.4t7s.vgeq160.summary}
{\noindent
\begin{center}
  \begin{tabular}{|l|l|l|} \hline
    $v \pmod{84}$ & Range of $v$ & Reference  \\ \hline
        
    $4, 7, 16$ & $v \geq 172$ & Lemma~\ref{lemma.4t7s.inf.family.main} \\ \hline
    $19$    & $187 \leq v \leq 523$ & Lemma~\ref{lemma.4t7s.adhoc.rec.v19mod84} \\ \hline
            & $v \geq 607$ & Lemma~\ref{lemma.4t7s.inf.family.v19mod84} \\ \hline
    $28, 31, 40, 43, 52, 55, 64, 67$ & $v \geq 196$ & Lemma~\ref{lemma.4t7s.inf.family.main} \\ \hline
    $76$    & $160 \leq v \leq 496$ & Lemma~\ref{lemma.4t7s.adhoc.rec.v76mod84} \\ \hline
            & $v \geq 580$ & Lemma~\ref{lemma.4t7s.inf.family.v76mod84} \\ \hline
    $79$    & $163 \leq v \leq 247$ & Lemma~\ref{lemma.4t7s.adhoc.rec.v79mod84} \\ \hline
            & $v \geq 331$ & Lemma~\ref{lemma.4t7s.inf.family.v79mod84} \\ \hline
  \end{tabular}
\end{center}}
\end{table}


By forming a block on each group of size $4$ in the known $4$-GDDs in Theorem~\ref{theorem.4t7s.summary}, we get Corollary~\ref{corollary.4gdd.1^4t_7^s.from.4t7s}.

\begin{corollary}
\label{corollary.4gdd.1^4t_7^s.from.4t7s}
Suppose that $t+s \equiv 1 \pmod{3}$ and $s \equiv 0$ or $1 \pmod{4}$ where either $t \geq 4$ or $s \geq 4$. 
Then there exists a $4$-GDD of type $1^{4t} 7^s$ except possibly for the pairs $(t,s)$ for which a $4$-GDD of type~$4^t 7^s$ is given as unknown in Theorem~\ref{theorem.4t7s.summary}.
\end{corollary}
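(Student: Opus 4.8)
The plan is to build the desired $4$-GDD of type $1^{4t} 7^s$ directly from a $4$-GDD of type $4^t 7^s$, which exists under exactly the stated hypotheses, and with exactly the stated exceptions, by Theorem~\ref{theorem.4t7s.summary}. The idea is the standard one of breaking each group of size $4$ into four singleton groups, while adding a single block to account for the pairs of points that this splitting newly separates.

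Concretely, I would start with a $4$-GDD $(X, \mathcal{B})$ of type $4^t 7^s$ whose point set $X$ is partitioned into $t$ groups $G_1, \ldots, G_t$ of size $4$ together with $s$ groups of size $7$. For each group $G_i$ of size $4$, I form a new block equal to $G_i$ itself, a $4$-element subset of $X$, and I adjoin all $t$ of these blocks to $\mathcal{B}$ to obtain a new collection $\mathcal{B}'$. I then re-partition $X$ by replacing each size-$4$ group $G_i$ with its four singleton subsets while keeping the $s$ groups of size $7$ unchanged; this yields a partition of type $1^{4t} 7^s$ on the same $v = 4t + 7s$ points.

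It then remains to verify that $(X, \mathcal{B}')$ is a $4$-GDD with respect to this new partition. Every block still has size $4$. For the pair-coverage condition, I would argue by cases on where two points $x,y$ of distinct new groups lie. If $x$ and $y$ come from two different original groups, then they are covered exactly once by $\mathcal{B}$, and none of the added blocks, each of which lies entirely inside a single former size-$4$ group, covers them again; so they remain covered exactly once. If instead $x$ and $y$ come from the same former size-$4$ group $G_i$, then they are covered by no block of $\mathcal{B}$, by the original GDD condition, but they are covered exactly once by the single added block $G_i$. Finally, since no added block joins two points lying in a common new group, the requirement that points of the same group never share a block is preserved. Hence $(X, \mathcal{B}')$ is a $4$-GDD of type $1^{4t} 7^s$.

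Because this construction applies verbatim whenever the input $4$-GDD of type $4^t 7^s$ exists, the list of possible exceptions transfers unchanged from Theorem~\ref{theorem.4t7s.summary}, giving the corollary. The argument is entirely elementary, so I expect no genuine obstacle; the only point requiring care is the bookkeeping in the pair-coverage check, namely ensuring that the intra-group pairs (previously uncovered) become covered exactly once, while every inter-group pair remains covered exactly once and is never double-covered by the newly added blocks.
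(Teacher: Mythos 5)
Your proposal is correct and is precisely the paper's own argument: the paper proves the corollary by ``forming a block on each group of size $4$'' in the $4$-GDDs of Theorem~\ref{theorem.4t7s.summary}, which is exactly your construction, with your write-up simply supplying the routine pair-coverage verification that the paper leaves implicit.
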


\section{Acknowledgements} 
This research used the computational cluster Katana supported by Research Technology Services at UNSW Sydney.
The third author acknowledges the support from an Australian Government Research Training Program Scholarship and from the School of Mathematics and Statistics, UNSW Sydney.
The authors are grateful to the anonymous referee for providing the construction in Lemma~\ref{lemma.4gdd.4^16_7^9}.

\section*{ORCID}
R. J. R. Abel:     https://orcid.org/0000-0002-3632-9612\\
T. Britz:          https://orcid.org/0000-0003-4891-3055\\
Y. A. Bunjamin:    https://orcid.org/0000-0001-6849-2986\\
D. Combe:          https://orcid.org/0000-0002-1055-3894

\section*{Appendix}

\begin{table}[ht]
\caption{$4$-GDD of type $4^6 7^4$} \label{4674}
\medskip
Points: $a_i,b_i,c_i,f_i,p_i,q_i,r_i,s_i$ for $i\in\Z_6$; \, $d_i$ for $i\in\Z_3$; \, $\infty$.\\
Groups: $\{a_i, a_{i+3}, b_i, b_{i+3}, c_i, c_{i+3}, d_i\}$ for $i \in \{0,1,2\}$; \, $\{f_i : i \in \Z_6\}\cup\{\infty\}$;\\\phantom{Groups:}\hspace*{.33mm}
        $\{p_i, p_{i+3}, q_i, q_{i+3}\}$ and $\{r_i, r_{i+3}, s_i, s_{i+3}\}$ for $i \in \{0,1,2\}$.\\
Develop the following blocks  $(\bmod\ 6)$. 
The first block in the first column generates just a single block.
The second block in the first column generates two blocks.
{\small
\[ 
  \begin{array}{|l|l|l|l|l|l|}\hline
    \{d_0,d_1,d_2,\infty\} & \{a_0,a_1,f_0,p_0\} & \{a_0,c_5,q_4,s_2\} & \{b_0,b_1,f_3,r_4\} & \{b_0,p_4,s_5,\infty\} & \{c_0,q_1,r_1,\infty\}\\\hline
    \{a_0,a_2,a_4,\infty\} & \{a_0,b_1,c_2,r_0\} & \{a_0,d_1,f_4,p_3\} & \{b_0,c_5,q_1,s_3\} & \{c_0,c_2,f_0,p_2\} & \{d_0,p_1,q_2,r_5\}\\\hline
     & \{a_0,b_2,f_1,q_0\} & \{a_0,d_2,q_5,s_5\} & \{b_0,d_2,f_4,q_0\} & \{c_0,c_1,f_3,r_0\} & \{f_0,p_3,p_4,s_2\}\\\hline
     & \{a_0,b_4,p_1,q_3\} & \{a_0,f_3,p_4,r_3\} & \{b_0,d_1,p_1,r_1\} & \{c_0,d_1,f_5,s_2\} & \{f_0,q_0,q_4,s_1\}\\\hline
     & \{a_0,b_5,r_1,s_0\} & \{a_0,p_2,r_4,r_5\} & \{b_0,f_1,q_2,r_0\} & \{c_0,d_2,r_3,s_1\} & \{f_0,q_3,r_2,r_4\}\\\hline
     & \{a_0,c_1,f_2,s_1\} & \{a_0,r_2,s_3,s_4\} & \{b_0,f_0,s_0,s_4\} & \{c_0,p_3,p_5,s_5\} & \\\hline
     & \{a_0,c_4,q_1,q_2\} & \{b_0,b_2,c_4,p_2\} & \{b_0,p_5,q_3,s_2\} & \{c_0,p_1,q_0,r_2\} & \\\hline
  \end{array}
\]}
\end{table}

\begin{table}[ht]
\caption{$4$-GDD of type $4^5 7^5$} \label{4575}
\medskip
Points: $a_i,b_i,c_i,d_i,e_i,f_i,g_i,p_i,q_i,r_i,s_i$ for $i\in\Z_5$.\\
Groups: $\{a_i,b_i,c_i,d_i,e_i,f_i,g_i\}$ for $i \in \Z_5$;  \, 
        $\{p_i,q_i,r_i,s_i\}$ for $i\in \Z_5$.\\
Develop the following blocks  $(\bmod\ 5)$.
{\small
\[
  \begin{array}{|l|l|l|l|l|l|}\hline
    \{a_0,a_1,b_2,q_0\} & \{a_0,c_3,r_3,s_0\} & \{b_0,b_1,e_4,q_0\} & \{b_0,f_2,p_2,r_1\} & \{c_0,f_1,r_2,r_3\} & \{e_0,e_1,f_2,p_0\}\\\hline
    \{a_0,a_2,p_0,s_1\} & \{a_0,d_1,d_4,f_2\} & \{b_0,b_2,f_3,g_1\} & \{b_0,g_2,p_0,r_3\} & \{c_0,g_1,r_1,s_4\} & \{e_0,f_4,s_2,s_4\}\\\hline
    \{a_0,b_3,c_1,r_0\} & \{a_0,d_2,d_3,g_4\} & \{b_0,c_2,d_4,q_1\} & \{b_0,p_3,p_4,s_2\} & \{d_0,e_1,e_3,g_4\} & \{e_0,g_2,g_4,s_1\}\\\hline
    \{a_0,b_4,e_1,q_1\} & \{a_0,e_2,p_4,r_1\} & \{b_0,c_4,d_3,s_0\} & \{c_0,c_2,e_1,p_2\} & \{d_0,e_2,q_0,r_2\} & \{f_0,f_1,g_2,q_0\}\\\hline
    \{a_0,c_2,e_4,s_2\} & \{a_0,e_3,r_4,s_3\} & \{b_0,c_1,g_3,s_4\} & \{c_0,c_1,f_4,q_2\} & \{d_0,e_4,q_3,r_1\} & \{f_0,q_1,q_2,s_4\}\\\hline
    \{a_0,c_4,g_2,q_2\} & \{a_0,f_1,f_3,p_2\} & \{b_0,d_2,p_1,s_3\} & \{c_0,d_3,f_2,p_4\} & \{d_0,f_2,s_3,s_4\} & \{g_0,g_1,p_2,r_3\}\\\hline
                        & \{a_0,f_4,g_3,r_2\} & \{b_0,d_1,r_0,s_1\} & \{c_0,d_1,g_4,p_3\} & \{d_0,p_0,p_3,q_1\} & \{g_0,q_1,q_4,s_0\}\\\hline
                        & \{a_0,g_1,p_1,q_3\} & \{b_0,e_1,f_4,r_4\} & \{c_0,e_3,p_1,q_0\} & \{d_0,q_4,r_0,r_3\} & \\\hline
  \end{array}
\]}
\end{table}

\newpage

\begin{table}[ht]
\caption{$4$-GDD of type $4^9 7^4$} \label{4974}
\medskip
Points: $a_i,b_i,c_i,d_i,p_i,q_i,r_i,s_i,y_i,z_i$ for $i\in\Z_6$; \, $\infty_1$, $\infty_2$, $\infty_3$, $\infty_4$.\\
Groups: $\{a_i : i\in\Z_6\}\cup\{\infty_1\}$; \,  $\{b_i : i\in\Z_6\}\cup\{\infty_2\}$; \, $\{c_i : i\in\Z_6\}\cup\{\infty_3\}$; \, $\{d_i : i\in\Z_6\}\cup\{\infty_4\}$;\\\phantom{Groups:}\hspace*{.33mm}
        $\{p_i, p_{i+3}, q_i, q_{i+3}\}$, $\{r_i, r_{i+3}, s_i, s_{i+3}\}$ and $\{y_i, y_{i+3}, z_i, z_{i+3}\}$ for $i \in \{0,1,2\}$.\\
Develop the following blocks  $(\bmod\ 6)$. The block in the first column generates just a single block.
{\small
\[ 
  \begin{array}{|l|l|l|l|l|l|}\hline
    \{\infty_1,\infty_2,\infty_3,\infty_4\} & \{a_0,b_0,d_0,p_0\} & \{a_0,c_5,z_3,\infty_2\} & \{b_0,c_4,q_3,y_0\}      & \{c_0,d_5,y_5,\infty_1\} & \{d_0,r_2,y_3,z_4\}\\\hline
     & \{a_0,b_1,d_2,y_0\}                  & \{a_0,d_4,q_2,r_2\} & \{b_0,c_5,r_0,z_4\}      & \{c_0,p_3,p_5,q_1\}      & \{d_0,s_2,y_5,\infty_2\}\\\hline
     & \{a_0,b_2,p_1,y_3\}                  & \{a_0,d_5,r_4,s_3\} & \{b_0,c_2,s_3,y_2\}      & \{c_0,p_4,r_4,r_5\}      & \{p_0,q_1,r_5,\infty_2\}\\\hline
     & \{a_0,b_3,r_0,z_0\}                  & \{a_0,p_3,p_4,z_4\} & \{b_0,d_2,p_3,y_4\}      & \{c_0,q_2,y_3,\infty_4\} & \{p_0,q_5,s_1,\infty_1\}\\\hline
     & \{a_0,b_4,s_0,\infty_4\}             & \{a_0,p_5,y_2,y_4\} & \{b_0,d_4,q_5,r_1\}      & \{c_0,s_0,y_1,z_3\}      & \{p_0,r_3,y_0,\infty_3\}\\\hline
     & \{a_0,b_5,z_1,\infty_3\}             & \{a_0,q_3,q_4,z_5\} & \{b_0,p_2,s_0,z_1\}      & \{c_0,s_2,z_0,z_2\}      & \{p_0,r_2,z_3,\infty_4\}\\\hline
     & \{a_0,c_0,d_1,q_0\}                  & \{a_0,q_5,s_4,s_5\} & \{b_0,p_4,s_1,z_0\}      & \{d_0,p_3,s_3,y_1\}      & \{q_0,y_4,y_5,z_3\}\\\hline
     & \{a_0,c_1,d_3,z_2\}                  & \{a_0,r_3,r_5,y_5\} & \{b_0,q_2,q_4,s_5\}      & \{d_0,p_5,s_1,z_3\}      & \{r_0,s_2,s_4,y_4\}\\\hline
     & \{a_0,c_2,p_2,s_1\}                  & \{b_0,c_0,d_3,p_1\} & \{b_0,q_1,r_4,y_3\}      & \{d_0,q_0,r_1,z_0\}      & \\\hline
     & \{a_0,c_3,q_1,y_1\}                  & \{b_0,c_1,d_5,s_4\} & \{b_0,r_2,z_5,\infty_1\} & \{d_0,q_2,s_0,\infty_3\} & \\\hline
     & \{a_0,c_4,r_1,s_2\}                  & \{b_0,c_3,q_0,r_5\} & \{c_0,d_0,p_2,r_0\}      & \{d_0,q_3,z_1,z_2\}      & \\\hline
  \end{array}
\]}
\end{table}

\begin{table}[ht]
\caption{$4$-GDD of type $4^2 7^8$} \label{4278}
\medskip
Points: $a_i,b_i,c_i,d_i,e_i,f_i,g_i,p_i,x_i,y_i$ for $i\in\Z_6$; \, $q_i,z_i$ for $i\in\Z_2$.\\
Groups: $\{a_i,b_i,c_i,d_i,e_i,f_i,g_i\}$ for $i \in \Z_6$;\\\phantom{Groups:}\hspace*{.33mm}
        $\{x_i,x_{i+2},x_{i+4},y_i,y_{i+2},y_{i+4},z_i\}$ and $\{p_i,p_{i+2},p_{i+4},q_i\}$ for $i\in \{0,1\}$.\\
Develop the following blocks  $(\bmod\ 6)$.
The first block in the first column generates just a single block.
The second block in the first column generates two blocks.
All other blocks in the first column generate three blocks each.
{\small
\[
  \begin{array}{|l|l|l|l|l|l|}\hline
    \{q_0,q_1,z_0,z_1\} & \{a_0,a_1,f_2,p_0\} & \{a_0,d_5,e_1,x_3\} & \{b_0,d_3,d_5,g_4\} & \{c_0,c_2,g_5,x_2\} & \{d_0,f_4,p_5,y_2\}\\\hline
    \{a_0,a_2,a_4,q_0\} & \{a_0,b_2,c_5,y_0\} & \{a_0,e_3,g_5,z_1\} & \{b_0,d_2,p_5,x_1\} & \{c_0,d_3,f_2,p_4\} & \{d_0,g_3,q_0,x_1\}\\\hline
    \{a_0,a_3,b_1,b_4\} & \{a_0,b_3,f_4,x_5\} & \{a_0,e_2,p_3,y_4\} & \{b_0,d_4,p_0,x_0\} & \{c_0,d_5,p_3,z_1\} & \{d_0,g_2,y_4,z_1\}\\\hline
    \{c_0,c_3,d_1,d_4\} & \{a_0,b_5,p_1,z_0\} & \{a_0,e_4,p_4,y_3\} & \{b_0,d_1,q_0,y_0\} & \{c_0,e_4,f_3,q_0\} & \{e_0,f_4,p_4,x_3\}\\\hline
    \{e_0,e_3,y_0,y_3\} & \{a_0,c_1,d_3,x_0\} & \{a_0,f_3,g_4,p_2\} & \{b_0,e_2,e_4,g_3\} & \{c_0,e_5,f_1,y_0\} & \{e_0,f_1,q_1,y_4\}\\\hline
    \{f_0,f_3,x_0,x_3\} & \{a_0,c_2,g_3,q_1\} & \{a_0,f_5,g_1,x_1\} & \{b_0,e_5,x_3,x_4\} & \{c_0,e_3,x_3,z_0\} & \{f_0,g_4,p_3,y_1\}\\\hline
    \{g_0,g_3,p_0,p_3\} & \{a_0,c_3,x_4,y_5\} & \{b_0,b_1,c_5,f_3\} & \{b_0,f_4,f_5,z_0\} & \{c_0,f_5,g_2,g_4\} & \{g_0,g_1,x_2,y_5\}\\\hline
                        & \{a_0,c_4,x_2,y_1\} & \{b_0,b_2,e_3,g_1\} & \{b_0,g_2,y_2,y_3\} & \{d_0,e_3,e_4,p_0\} & \{g_0,p_1,p_2,x_5\}\\\hline
                        & \{a_0,d_1,d_2,y_2\} & \{b_0,c_1,p_3,y_5\} & \{b_0,p_4,q_1,x_5\} & \{d_0,e_5,f_2,x_0\} & \\\hline
                        & \{a_0,d_4,e_5,g_2\} & \{b_0,c_2,p_1,y_1\} & \{c_0,c_1,e_2,p_1\} & \{d_0,f_1,f_3,y_3\} & \\\hline
  \end{array}
\]}
\end{table}

\begin{table}[ht]
\caption{$4$-GDD of type $4^8 7^5$} \label{4875}
\medskip
Points: $a_i,b_i,c_i,d_i,p_i,q_i,r_i,s_i,t_i$ for $i\in\Z_7$; \; $\infty_1,\infty_2,\infty_3,\infty_4$.\\
Groups: $\{a_i,b_i,c_i,d_i\}$ for $i \in \Z_7$; \; $\{w_i\::\: i\in\Z_7\}$ for $w\in \{p,q,r,s,t\}$; \; $\{\infty_1,\infty_2,\infty_3,\infty_4\}$.\\
Develop the following blocks  $(\bmod\ 7)$.
{\small
\[
  \begin{array}{|l|l|l|l|l|l|}\hline
    \{a_0,a_1,b_2,t_0\}      & \{a_0,d_2,d_3,p_1\}      & \{b_0,b_2,c_1,s_1\}      & \{b_0,p_1,q_4,r_5\}      & \{c_0,d_5,q_2,s_5\}      & \{d_0,r_3,s_6,t_2\}\\\hline
    \{a_0,a_2,c_1,s_0\}      & \{a_0,d_5,q_1,r_6\}      & \{b_0,b_3,s_0,t_0\}      & \{b_0,p_2,q_6,s_5\}      & \{c_0,p_6,q_0,t_2\}      & \{d_0,r_0,s_2,\infty_2\}\\\hline
    \{a_0,a_3,q_0,r_0\}      & \{a_0,d_6,s_2,\infty_4\} & \{b_0,c_2,q_0,r_4\}      & \{b_0,p_5,s_2,t_6\}      & \{c_0,p_3,q_1,\infty_4\} & \{p_0,r_1,s_5,t_6\}\\\hline
    \{a_0,b_3,b_4,p_0\}      & \{a_0,p_2,q_2,s_3\}      & \{b_0,c_3,q_2,t_3\}      & \{b_0,q_5,r_0,\infty_3\} & \{c_0,q_4,r_3,s_1\}      & \{q_0,r_3,s_2,t_4\}\\\hline
    \{a_0,b_5,c_2,d_1\}      & \{a_0,p_3,q_5,t_3\}      & \{b_0,c_5,q_1,\infty_2\} & \{b_0,r_6,t_1,\infty_4\} & \{c_0,r_5,t_1,\infty_1\} & \\\hline
    \{a_0,b_6,r_1,t_1\}      & \{a_0,p_4,q_3,\infty_1\} & \{b_0,d_1,d_5,q_3\}      & \{c_0,c_1,d_4,t_4\}      & \{d_0,d_2,q_1,t_1\}      & \\\hline
    \{a_0,c_3,d_4,r_2\}      & \{a_0,p_5,r_3,s_4\}      & \{b_0,d_4,p_0,r_3\}      & \{c_0,c_3,p_1,r_0\}      & \{d_0,p_4,r_4,s_4\}      & \\\hline
    \{a_0,c_4,r_5,t_2\}      & \{a_0,p_6,t_4,\infty_2\} & \{b_0,d_6,p_6,r_1\}      & \{c_0,c_2,p_2,s_4\}      & \{d_0,p_1,t_5,\infty_3\} & \\\hline
    \{a_0,c_5,s_1,\infty_3\} & \{a_0,q_6,s_6,t_5\}      & \{b_0,d_2,s_3,\infty_1\} & \{c_0,d_2,p_4,t_6\}      & \{d_0,q_0,s_5,t_3\}      & \\\hline
  \end{array}
\]}
\end{table}

\begin{table}[ht]
\caption{$4$-GDD of type $4^8 7^8$} \label{4878}
\medskip
Points: $a_i,b_i,c_i,d_i,p_i,q_i$ for $i\in\Z_{14}$; \; $\infty_1,\infty_2,\infty_3,\infty_4$.\\
Groups: $\{w_i,w_{i+2},w_{i+4},w_{i+6},w_{i+8},w_{i+10},w_{i+12}\}$ for $w \in \{a,b,c,d\}$ and $i\in\{0,1\}$;\\\phantom{Groups:}\hspace*{.33mm}
        $\{p_i,p_{i+7},q_i,q_{i+7}\}$ for $i\in \{0,1,2,3,4,5,6\}$; \; $\{\infty_1,\infty_2,\infty_3,\infty_4\}$.\\
Develop the following blocks  $(\bmod\ 14)$. The two blocks in the first column generate seven blocks each.
{\small
\[
  \begin{array}{|l|l|l|l|l|l|}\hline
 \{a_0,a_7,c_{1},c_{8}\}    &  \{a_0,a_{1},b_{2},p_{5}\}    &  \{a_0,b_{8},c_{10},q_{2}\}     &  \{a_0,c_{12},p_{11},\infty_2\}    & \{b_0,c_{5},p_{0},q_{2}\}       &  \{c_{0},d_{7},q_{3},q_{13}\}  \\\hline
 \{b_0,b_7,d_{1},d_{8}\}    &  \{a_{0},a_{3},d_{1},p_{2}\}  &  \{a_{0},b_{9},c_{13},q_{8}\}   &  \{a_{0},d_{10},p_{10},q_{13}\}    & \{b_{0},c_{7},p_{13},\infty_3\} &  \{c_{0},d_{9},q_{2},q_{4}\}\\\hline
                            &  \{a_{0},a_{5},d_{2},q_{0}\}  &  \{a_0,b_{11},d_{3},p_{9}\}     &  \{a_0,d_{7},q_{12},\infty_3\}     & \{b_{0},c_{8},q_{1},q_{4}\}     &  \{c_0,d_{3},q_{5},\infty_1\} \\\hline
                            &  \{a_{0},b_{0},c_{0},d_{0}\}  &  \{a_{0},b_{12},d_{5},q_{4}\}   &  \{a_{0},p_{0},p_{1},p_{3}\}       & \{b_{0},d_{5},q_{9},\infty_2\}  &  \{d_{0},d_{3},p_{5},p_{11}\}\\\hline
                            &  \{a_{0},b_{3},c_{2},d_{6}\}  &  \{a_{0},b_{13},d_{9},q_{6}\}   &  \{a_{0},p_{12},q_{10},q_{11}\}    & \{b_{0},p_{8},q_{0},q_{5}\}     &  \{d_{0},d_{1},p_{10},q_{1}\}\\\hline
                            &  \{a_{0},b_{4},c_{5},p_{6}\}\ & \{a_0,b_{10},p_{7},\infty_1\}   &  \{b_{0},b_{3},c_{9},p_{7}\}       & \{b_{0},p_{9},q_{3},\infty_4\}  &  \{p_{0},p_{5},q_{1},q_{9}\}\\\hline
			          &  \{a_{0},b_{5},c_{3},q_{1}\}  & \{a_{0},c_{6},d_{4},p_{8}\}     &  \{b_{0},b_{1},d_{13},p_{6}\}      & \{c_{0},c_{1},d_{2},p_{5}\}  & \\\hline
                            &  \{a_{0},b_{6},c_{9},q_{3}\}  & \{a_{0},c_{7},d_{13},q_{7}\}    &  \{b_{0},b_{5},d_{2},p_{1}\}       & \{c_{0},c_{5},d_{10},p_{8}\} & \\\hline
                            &  \{a_{0},b_{7},c_{4},q_{5}\}  & \{a_{0},c_{11},d_{8},\infty_4\} &  \{b_{0},c_{10},d_{4},d_{9}\}      & \{c_{0},c_{3},p_{0},p_{10}\} & \\\hline
  \end{array}
\]}
\end{table}

\begin{table}[ht]
\caption{$4$-GDD of type $4^{14} 7^5$} \label{41475}
\medskip
Points: $a_i,b_i,c_i,d_i,p_i,q_i$ for $i\in\Z_{14}$; \; $r_i,s_i$ for $i\in\Z_2$; \; $\infty_1,\infty_2,\infty_3$.\\
Groups: $\{a_i,a_{i+7},b_i,b_{i+7}\}$ and $\{c_i,c_{i+7},d_i,d_{i+7}\}$ for $i\in \{0,1,2,3,4,5,6\}$;\\\phantom{Groups:}\hspace*{.33mm}
        $\{w_i,w_{i+2},w_{i+4},w_{i+6},w_{i+8},w_{i+10},w_{i+12}\}$ for $w \in \{p,q\}$ and $i\in\{0,1\}$;\\\phantom{Groups:}\hspace*{.33mm}
        $\{r_0,r_1,s_0,s_1,\infty_1,\infty_2,\infty_3\}$.\\
Develop the following blocks  $(\bmod\ 14)$. The block in the first column generates seven blocks.
{\small
\[
  \begin{array}{|l|l|l|l|l|l|}\hline
    \{p_0,p_{7},q_{1},q_{8}\} & \{a_0,a_{1},a_{3},p_0\}       &  \{a_0,b_{9},c_{5},q_{6}\}      & \{a_0,d_{7},p_{10},q_{10}\}     & \{b_0,c_{11},p_{8},\infty_3\} & \{c_0,d_{8},p_{3},q_{10}\}\\\hline
                              &  \{a_0,a_{4},b_{2},p_{2}\}    &  \{a_0,b_{11},c_{6},q_{13}\}    & \{a_0,d_{11},q_{8},r_{1}\}      & \{b_0,d_{1},p_{12},q_{7}\}    & \{c_0,d_{13},p_{5},\infty_1\}\\\hline
                              &  \{a_0,a_{5},d_0,q_0\}        &  \{a_0,b_{10},d_{3},r_0\}       &  \{a_0,d_{12},q_{7},s_{1}\}     & \{b_0,d_{3},p_{2},q_{1}\}     & \{c_0,d_{4},q_0,q_{11}\}\\\hline
                              &  \{a_0,a_{6},d_{2},q_{3}\}    &  \{a_0,b_{13},q_{5},\infty_1\}  &  \{a_0,d_{13},q_{12},\infty_3\} & \{b_0,d_{10},p_{4},q_0\}      & \{c_0,p_{2},p_{7},q_{13}\}\\\hline
                              &  \{a_0,b_{3},c_{3},c_{4}\}    &  \{a_0,c_{8},d_{4},d_{5}\}      &  \{b_0,b_{2},b_{6},p_{7}\}      & \{b_0,d_{9},q_{3},\infty_2\}  & \{d_0,d_{3},d_{9},p_{7}\}\\\hline
                              &  \{a_0,b_{1},c_0,d_{1}\}      &  \{a_0,c_{10},d_{8},p_{4}\}     & \{b_0,b_{3},c_{8},s_0\}         & \{b_0,p_{6},q_{4},q_{9}\}     & \{d_0,p_0,q_{5},s_0\}\\\hline
                              &  \{a_0,b_{4},c_{2},p_{3}\}    &  \{a_0,c_{11},p_{6},p_{9}\}     & \{b_0,b_{5},d_{2},d_{4}\}       & \{b_0,p_{9},q_{13},r_{1}\}    & \\\hline
                              &  \{a_0,b_{5},c_{7},q_{1}\}    &  \{a_0,c_{13},p_{5},s_0\}       & \{b_0,b_{1},d_{6},p_{11}\}      & \{c_0,c_{2},c_{10},d_{5}\}    & \\\hline
                              &  \{a_0,b_{6},c_{9},q_{4}\}    &  \{a_0,c_{1},p_{1},\infty_2\}   & \{b_0,c_{6},d_{8},d_{12}\}      & \{c_0,c_{5},p_{4},r_{1}\}     & \\\hline
                              &  \{a_0,b_{8},c_{12},q_{2}\}   & \{a_0,d_{6},p_{7},p_{8}\}       & \{b_0,c_{7},p_{3},q_{5}\}       & \{c_0,c_{3},q_{5},q_{6}\}     & \\\hline
  \end{array}
\]}
\end{table}

\begin{table}[ht]
\small
\caption{$4$-GDD of type $1^{64}7^{9}$} 
\label{16479}
\medskip
Points: $a_i,b_i,c_i,p_i,q_i,r_i$ for $i\in Z_{21}$; \, $\infty$.   \\
Groups: $\{w_i,w_{i+3},w_{i+6}, \ldots, w_{i+18}\}$   for $w \in \{a,b,c\}$ and $i \in \{0,1,2\}$;   \\\phantom{Groups:}\hspace*{.33mm}
$\{w_i\}$   for $w \in \{p,q,r\}$ and $i \in Z_{21}$; \, $\{\infty\}$.       

Develop the following blocks  $(\bmod\ 21)$. 
{\small
\[ 
  \begin{array}{|l|l|l|l|l|}\hline
  \{a_{0},a_{7},b_{17},q_{5}\}    & \{a_{0},b_{4},c_{9},r_{19}\}     & \{a_{0},c_{7},p_{7},p_{16}\}    & \{b_{0},c_{2},c_{16},r_{9}\}    &  \{p_{0},p_{11},q_{0},r_{0}\}     \\\hline 
  \{a_{0},a_{8},c_{4},p_{6}\}     & \{a_{0},b_{7},c_{20},r_{3}\}     & \{a_{18},c_{11},p_{6},p_{19}\}  & \{b_{0},c_{1},p_{5},p_{8}\}     &  \{p_{4},p_{18},q_{17},r_{0}\}     \\\hline
  \{a_{0},a_{16},c_{3},q_{3}\}    & \{a_{0},b_{18},c_{5},\infty\}    & \{a_{1},c_{19},p_{12},q_{18}\}  & \{b_{0},c_{4},p_{9},p_{15}\}    &  \{p_{12},p_{17},q_{8},q_{10}\}     \\\hline 
  \{a_{0},a_{4},p_{4},r_{8}\}     & \{a_{16},b_{3},p_{15},p_{19}\}   & \{a_{0},c_{1},p_{2},r_{13}\}    & \{b_{0},c_{6},p_{19},q_{16}\}   &  \{p_{8},p_{9},q_{13},r_{10}\}     \\\hline
  \{a_{0},a_{20},q_{11},q_{20}\}  & \{a_{4},b_{15},p_{19},q_{14}\}   & \{a_{18},c_{18},q_{12},r_{2}\}  & \{b_{4},c_{16},p_{1},r_{17}\}   &  \{p_{11},q_{5},q_{19},r_{3}\}     \\\hline 
  \{a_{0},a_{2},q_{6},r_{18}\}    & \{a_{10},b_{5},p_{7},r_{19}\}     & \{a_{11},c_{2},q_{18},r_{17}\}  & \{b_{7},c_{10},p_{13},r_{19}\}  &  \{p_{19},q_{9},q_{20},r_{16}\}     \\\hline
  \{a_{9},a_{20},r_{16},r_{20}\}  & \{a_{7},b_{10},p_{20},r_{8}\}    & \{a_{0},c_{2},r_{2},r_{20}\}    & \{b_{0},c_{20},q_{4},q_{12}\}   &  \{p_{20},q_{8},r_{14},\infty\}     \\\hline 
  \{a_{0},b_{2},b_{15},p_{5}\}    & \{a_{0},b_{1},p_{14},r_{12}\}    & \{b_{0},b_{1},c_{18},q_{15}\}   & \{b_{0},c_{7},q_{13},r_{5}\}    &  \{p_{10},r_{9},r_{15},r_{17}\}     \\\hline
  \{a_{0},b_{9},b_{14},q_{14}\}   & \{a_{0},b_{0},p_{17},r_{10}\}    & \{b_{2},b_{4},p_{3},q_{10}\}    & \{b_{0},c_{0},q_{11},r_{20}\}   &  \{q_{14},q_{15},q_{19},r_{8}\}     \\\hline 
  \{a_{0},b_{5},c_{15},p_{12}\}   & \{a_{0},b_{6},q_{13},q_{16}\}    & \{b_{7},b_{14},p_{7},r_{15}\}   & \{c_{16},c_{18},p_{12},q_{14}\} &  \{q_{10},q_{16},r_{11},r_{18}\}     \\\hline
  \{a_{0},b_{19},c_{13},q_{1}\}   & \{a_{6},b_{18},r_{0},r_{20}\}    & \{b_{0},b_{4},q_{1},r_{4}\}     & \{c_{0},c_{1},p_{20},q_{2}\}    &      \\\hline 
  \{a_{0},b_{20},c_{10},q_{18}\}  & \{a_{0},c_{11},c_{16},q_{2}\}    & \{b_{7},b_{18},r_{4},r_{13}\}   & \{c_{9},c_{17},r_{4},r_{20}\}   &      \\\hline
  \{a_{0},b_{13},c_{6},q_{9}\}    & \{a_{0},c_{19},p_{8},p_{10}\}    & \{b_{0},c_{9},c_{19},q_{2}\}    & \{c_{10},c_{14},r_{6},r_{16}\}  &     \\\hline
 \end{array}
\]}
\end{table}

\end{document}